\newtheorem{theorem}{Theorem}[section]
\newtheorem{lemma}[theorem]{Lemma}
\newtheorem{proposition}[theorem]{Proposition}
\newtheorem{corollary}[theorem]{Corollary}
\theoremstyle{definition}
\numberwithin{equation}{section}
\numberwithin{figure}{section}
\DeclareMathOperator{\Aut}{Aut}
\DeclareMathOperator{\diam}{diam}
\DeclareMathOperator{\dist}{dist}
\DeclareMathOperator{\odd}{odd}
\newcommand{\href}[2]{#2}
\newenvironment{tikzgraph}
  {\begin{tikzpicture}
      [vertex/.style={circle, draw=black, fill, inner sep=0mm, minimum size=3pt},edge/.style={semithick},
       subdivision/.style={circle, draw=black, fill=white, inner sep=0mm, minimum size=3pt},edge/.style={semithick}]\begin{scope}}
  {\end{scope}\end{tikzpicture}}
\begin{document} \onehalfspacing
\title[Odd cycle transversals in fullerene graphs]{Odd cycle transversals and independent sets in fullerene graphs}
\thanks{Research supported by CAPES-COFECUB project MA 622/08.}
\author{Luerbio Faria}
\address{Departamento de Matem\'atica, Faculdade de Forma\c c\~ao de Professores, Universidade do Estado do Rio de Janeiro, Brazil}
\email{luerbio@cos.ufrj.br}
\author{Sulamita Klein}
\address{COPPE/Sistemas, Universidade Federal do Rio de Janeiro, Brazil}
\email{sula@cos.ufrj.br}
\author{Mat\v ej Stehl\'ik}
\address{UJF-Grenoble 1 / CNRS / Grenoble-INP, G-SCOP UMR5272 Grenoble, F-38031, France}
\email{matej.stehlik@g-scop.inpg.fr}

\begin{abstract}
A fullerene graph is a cubic bridgeless plane graph with all faces of size $5$ and $6$. We show that that every fullerene graph on $n$ vertices can be made bipartite by deleting at most $\sqrt{12n/5}$ edges, and has an independent set with at least $n/2-\sqrt{3n/5}$ vertices. Both bounds are sharp, and we characterise the extremal graphs. This proves conjectures of Do\v sli\'c and Vuki\v cevi\'c, and of Daugherty. We deduce two further conjectures on the independence number of fullerene graphs, as well as a new upper bound on the smallest eigenvalue of a fullerene graph.
\end{abstract}

\maketitle

\section{Introduction}

A set of edges of a graph is an \emph{odd cycle (edge) transversal} if its removal results in a bipartite graph; the smallest size of an odd cycle transversal of $G$ is denoted by $\tau_{\odd}(G)$. Finding a minimum odd cycle transversal of a graph is equivalent to partitioning the vertex set into two parts, such that the number of edges between the two parts is maximum; this is known as the \emph{max-cut problem} in the literature.

Erd\H os~\cite{Erd65} observed that every graph has an odd cycle transversal containing at most half of its edges, and conjectured that every \emph{triangle-free} graph on $n$ vertices has an odd cycle transversal with at most $\frac1{25}n^2$ edges. Hopkins and Staton~\cite{HopSta82} proved that every triangle-free \emph{cubic} graph on $n$ vertices has an odd cycle transversal with at most $\frac3{10}n$ edges. For triangle-free cubic \emph{planar} graphs, the bound was improved to $\frac7{24}n+\frac76$ by Thomassen~\cite{Tho06}, and subsequently to $\frac9{32}n+\frac9{16}$ by Cui and Wang~\cite{CuWa09}.

A widely studied class of triangle-free cubic planar graphs is the class of \emph{fullerene graphs}: these are cubic bridgeless plane graphs with all faces of size $5$ or $6$. Do\v sli\'c and Vuki\v cevi\'c~\cite[Conjecture 13]{DoVu07} conjectured that every fullerene graph on $n$ vertices has an odd cycle transversal with at most $\sqrt{\frac{12}5n}$ edges, and showed that this bound is attained by fullerene graphs on $60k^2$ vertices, where $k \in \mathbb N$, with the icosahedral automorphism group. Dvo\v r\'ak, Lidick\'y and \v Skrekovski~\cite{DvLiSk12} have recently verified the conjecture asymptotically by proving that $\tau_{\odd}(G) = O(\sqrt n)$. The main result of this paper is a proof of the conjecture of Do\v sli\'c and Vuki\v cevi\'c.

\begin{theorem}\label{thm:main}
If $G$ is a fullerene graph on $n$ vertices, then $\tau_{\odd}(G) \leq \sqrt{\frac{12}5n}$. Equality holds if and only if $n=60k^2$, for some $k \in \mathbb N$, and $\Aut(G) \cong I_h$.
\end{theorem}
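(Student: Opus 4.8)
The plan is to pass to the planar dual and reinterpret $\tau_{\odd}$ as a matching problem among the pentagons. First I would record the standard consequence of Euler's formula: since $G$ is cubic with only $5$- and $6$-faces, it has exactly twelve pentagonal faces, so the dual $G^*$ is a triangulation of the sphere in which exactly twelve vertices have degree $5$ and every other vertex has degree $6$. I would then invoke the classical planar duality between bipartite subgraphs and $T$-joins: a set $F \subseteq E(G)$ is an odd cycle transversal if and only if its image under the edge bijection $e \mapsto e^*$ is a $T$-join of $G^*$, where $T$ is the set of odd faces of $G$ --- here precisely the twelve pentagons. Consequently $\tau_{\odd}(G)$ equals the minimum size of a $T$-join in $G^*$, and by the Edmonds--Johnson theorem this equals the minimum weight of a perfect matching of the twelve degree-$5$ vertices under the shortest-path metric $\dist_{G^*}$. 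The problem thus becomes one of bounding $W := \min_M \sum_{\{p,q\} \in M} \dist_{G^*}(p,q)$, the minimum over perfect matchings $M$ of the twelve pentagons.

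The core of the argument is then an isoperimetric estimate for the triangulation $G^*$. The target inequality is $n \ge \tfrac{5}{12} W^2$, equivalently $W \le \sqrt{12n/5}$, which I would prove by exploiting the quadratic growth of balls in a triangulation of maximum degree $6$. Writing $N = |V(G^*)| = n/2 + 2$, the combinatorial Gauss--Bonnet identity $\sum_v (6 - \deg v) = 12$ controls how fast the successive breadth-first layers around a vertex can grow: in the degree-$6$ regions each layer gains roughly $6$ vertices per step, while each enclosed pentagon contributes a unit curvature deficit that slows this growth. I would use this to show that the twelve pentagons cannot all be mutually well separated unless $N$ is large; quantitatively, realising matching weight $W$ forces at least $\tfrac{5}{24} W^2$ vertices in $G^*$, which is exactly $n/2$ in the extremal case.

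I expect the main obstacle to be precisely this isoperimetric inequality with its sharp constant. The difficulty is that the bound is \emph{global} rather than additive over the six matched pairs: in the extremal icosahedral fullerenes the six dual paths each have length $2k$, so the relevant quantity is $(\sum_i \ell_i)^2 = W^2$ and not $\sum_i \ell_i^2$, reflecting that the pentagons are spread over the whole sphere like the vertices of an icosahedron. Capturing this requires analysing the layer structure globally --- essentially comparing $G^*$ with the geodesic icosahedron --- rather than treating each path in isolation. I would also set up the estimate so that equality propagates strong rigidity: every breadth-first layer must attain the maximal growth permitted by Gauss--Bonnet, which forces the degree-$6$ regions to be perfect triangular-lattice patches and the twelve pentagons to occupy the vertices of an icosahedral subdivision.

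Finally, for the equality characterisation I would argue that equality can hold only when the curvature is distributed with full icosahedral symmetry and each of the twenty triangular patches bounded by neighbouring pentagons is a side-$k$ piece of the triangular lattice. This is exactly the $(k,k)$ Goldberg--Coxeter construction, which has $n = 20(k^2 + k^2 + k^2) = 60k^2$ vertices and automorphism group the full icosahedral group $I_h$; conversely, for these graphs the six sides of a perfect matching of the icosahedron each contribute $2k$, giving $W = 12k = \sqrt{12n/5}$, so the bound is attained exactly on this family.
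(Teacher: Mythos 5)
Your reduction to the dual is exactly the paper's starting point: $\tau_{\odd}(G)$ equals the minimum size of a $T$-join in the dual triangulation $G^*$, where $T$ is the set of twelve degree-$5$ vertices, and (by Edmonds--Johnson) this is the minimum-weight perfect matching of $T$ under $\dist_{G^*}$. Your description of the extremal family is also correct. But the entire difficulty of the theorem lives in the step you label ``the core of the argument,'' and there you have an assertion rather than a proof. The claim that realising matching weight $W$ forces at least $\tfrac{5}{24}W^2$ vertices is precisely the sharp global isoperimetric inequality that needs to be established, and ``comparing $G^*$ with the geodesic icosahedron'' or ``analysing the layer structure globally'' is not an argument. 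The obstruction you correctly identify --- that the bound involves $\bigl(\sum_i \ell_i\bigr)^2$ rather than $\sum_i \ell_i^2$, so one cannot simply grow disjoint balls along each matched pair --- is exactly where a naive ball-growth argument breaks down: when several pentagons cluster, the balls around them merge long before their combined area accounts for the required $\tfrac{5}{24}W^2$ vertices, and nothing in your sketch controls what happens after they merge.

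The paper's resolution of this is structural, not just a sharper layer count. Using Seymour's min-max theorem for $T$-joins and $T$-cuts in bipartite graphs (applied to a subdivision, then transferred to a ``refinement'' $G^{\vartriangle}$ of the triangulation), the minimum $T$-join is shown to equal half the maximum size of a \emph{disjoint packing} of $T$-cuts, which can be taken laminar and organised into disks, $3$-moats and $5$-moats. Disjointness makes the area accounting additive over the packing: each moat of width $k$ around a $p$-patch spans at least $(6-p)k^2 + 2k\sqrt{(6-p)A}$ faces (via an isoperimetric inequality of Justus for patches), the nesting of disks inside $3$-moats inside $5$-moats feeds the inner areas into the outer bounds, and a final Cauchy--Schwarz step converts the resulting sum of squares into the $\bigl(\sum_i \ell_i\bigr)^2$ form with the sharp constant. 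Without this min-max duality (or some equivalent device that turns the upper bound on the $T$-join into an upper bound on a disjoint family of separating structures), your plan has no mechanism for handling clustered pentagons, and the proof does not go through. The equality analysis you sketch would likewise need this machinery: in the paper, rigidity comes from forcing equality in each moat inequality and in Cauchy--Schwarz, which kills the $3$- and $5$-moats and equalises the twelve disk radii.
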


The rest of the paper is organised as follows. In Section~\ref{sec:terminology}, we cover the basic notation and terminology. In Section~\ref{sec:T-joins}, we recall the concepts of $T$-joins and $T$-cuts, and establish a bound on the minimum size of a $T$-join in a plane triangulation in terms of the maximum size of a packing of $T$-cuts in an auxiliary plane triangulation. In Section~\ref{sec:patches}, we introduce the notions of patches and moats, and prove bounds on the number of edges in moats. In Section~\ref{sec:proof}, we combine the bounds from the preceding two sections to complete the proof of Theorem~\ref{thm:main}. In Section~\ref{sec:independence}, we deduce a number of conjectures about the independence number of fullerene graphs. Finally, in Section~\ref{sec:eigenvalues}, we compute a new upper bound on the smallest eigenvalue of a fullerene graph.

\section{Notation and terminology}\label{sec:terminology}

Most terminology used in this paper is standard, and may be found in any graph theory textbook. All graphs considered are simple, that is, have no loops and multiple edges. The vertex and edge set of a graph $G$ is denoted by $V(G)$ and $E(G)$, respectively. If $X\subseteq V(G)$ or $X\subseteq E(G)$, we let $G-X$ be the graph obtained from $G$ by removing the elements in $X$, and $G[X]$ the subgraph of $G$ induced by $X$.

A graph is \emph{planar} if it can be drawn in the plane $\mathbb R^2$ so that its vertices are points in $\mathbb R^2$, and its edges are Jordan curves in $\mathbb R^2$ which intersect only at their end-vertices. A planar graph with a fixed embedding is called a \emph{plane graph}. If $G$ is a plane graph, the connected regions of $\mathbb R^2\setminus G$ are the \emph{faces} of $G$. A face of a plane graph $G$ bounded by three edges is a \emph{triangle} of $G$; if every face of $G$ is a triangle, then $G$ is a plane \emph{triangulation}. If $G$ is a plane graph, the \emph{dual graph} $G^*$ is the multigraph with precisely one vertex in each face of $G$, and if $e$ is an edge of $G$, then $G^*$ has an edge $e^*$ crossing $e$ and joining the two vertices of $G^*$ in the two faces of $G$ incident to $e$.

The \emph{distance} $\dist_G(u,v)$ between two vertices $u$ and $v$ in $G$ is the length of a shortest path in $G$ connecting $u$ and $v$. The \emph{open} and \emph{closed $k$-neighbourhood} of a subset $X \subseteq V(G)$ in $G$ are the sets $N_G^k(X)= \{v \in V(G) \mid \dist_G(v,X) = k\}$ and $N_G^k[X]= \{v \in V(G) \mid \dist_G(v,X) \leq k\}$, respectively. The usual open and closed neighbourhood is defined as $N_G(X)=N_G^1(X)$ and $N_G[X]=N_G^1[X]$, respectively. When $X=\{x\}$, we simply write $N_G^k[x]$ and $N_G^k(x)$. The size of the open neighbourhood $N_G(x)$ is the \emph{degree} $d_G(x)$. We let $\delta_G(X)$ be the set of edges of $G$ with exactly one end-vertex in $X$; if $H=G[X]$ we may also write $\delta_G(H)$ for $\delta_G(X)$. A set $C$ of edges is a \emph{cut} of $G$ if $C=\delta_G(X)$, for some $X \subseteq V(G)$. When there is no risk of ambiguity, we may omit the subscripts in the above notation.

An \emph{automorphism} of a graph $G$ is a permutation of the vertices such that adjacency is preserved. The set of all automorphisms of $G$ forms a group, known as the \emph{automorphism group $\Aut(G)$}. The \emph{full icosahedral group} $I_h \cong A_5 \times C_2$ is the group of all symmetries (including reflections) of the regular icosahedron.


\section{\texorpdfstring{$T$-joins and $T$-cuts}{T-joins and T-cuts}}\label{sec:T-joins}

To prove Theorem~\ref{thm:main}, we will consider the dual of a fullerene graph, that is, a plane triangulation $G$ with all vertices of degree $5$ and $6$. We denote by $T$ the set of $5$-vertices of $G$; it follows from Euler's formula that $|T|=12$. The problem is to find a minimal set $J$ of edges such that $G-J$ has no odd-degree vertices. Such a set of edges is known as a $T$-join.

More generally, let $G$ be any graph with a distinguished set $T$ of vertices such that $|T|$ is even. A \emph{$T$-join} of $G$ is a subset $J \subseteq E(G)$ such that $T$ is equal to the set of odd-degree vertices in $G[J]$. The minimum size of a $T$-join of $G$ is denoted by $\tau(G,T)$.

A \emph{$T$-cut} is an edge cut $\delta(X)$ such that $|T\cap X|$ is odd. A \emph{packing} of $T$-cuts is a disjoint collection $\delta(\mathcal F)=\{\delta(X) \mid X \in \mathcal F\}$ of $T$-cuts of $G$; the maximum size of a packing of $T$-cuts is denoted by $\nu(G,T)$. For more information on $T$-joins and $T$-cuts, the reader is referred to~\cite{CCPS98, LoPl86, Sch03}.

Since every $T$-join intersects every $T$-cut, $\nu(G,T) \leq \tau(G,T)$. If $G$ is bipartite, we in fact have equality.

\begin{theorem}[Seymour~\cite{Sey81}]\label{thm:seymour}
For every bipartite graph $G$ and every subset $T \subseteq V(G)$ such that $|T|$ is even, $\tau(G,T) = \nu(G,T)$. \end{theorem}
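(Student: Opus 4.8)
The inequality $\nu(G,T)\le\tau(G,T)$ holds for every graph, as already noted, so the task is to produce a packing of $T$-cuts whose size matches $\tau(G,T)$. The plan is to argue by induction on $|V(G)|+|E(G)|$, the base case $T=\varnothing$ being trivial (both quantities are $0$). Throughout I may assume $G$ is connected, since both $\tau$ and $\nu$ decompose over the components, and since a $T$-join exists precisely when each component meets $T$ in an even number of vertices.

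First I would dispose of low-degree vertices. Suppose $G$ has a vertex $u$ of degree at most $1$; if $u$ has an incident edge, call it $e=uw$. When $u\notin T$, the edge $e$ lies in no $T$-join (using it would make $u$ a vertex of odd degree, forcing $u\in T$) and in no relevant $T$-cut, so I delete $u$ and apply induction to $(G-u,T)$. When $u\in T$, the edge $e$ is forced into every $T$-join, and $\delta(\{u\})=\{e\}$ is itself a $T$-cut; here I pass to $(G-u,\,T\triangle\{u,w\})$, which is again bipartite with an even terminal set, and adjoin $\{e\}$ to the packing obtained by induction. The only care needed is to orient the inductively produced cuts — using that a set and its complement determine the same edge cut, and that complementation preserves being a $T$-cut when $|T|$ is even — so that none of them picks up $e$ when reinterpreted in $G$; this is routine.

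The heart of the argument is the case of minimum degree at least $2$, and the engine is a \emph{closest pair} of terminals. Choose $s,t\in T$ with $\dist_G(s,t)=d$ minimum over all pairs in $T$, and for $0\le r<d$ set $B_r=N_G^r[s]$. Because $s,t$ realise the minimum distance, every terminal other than $s$ lies at distance at least $d$ from $s$, so $T\cap B_r=\{s\}$; hence $|T\cap B_r|=1$ is odd and each $\delta(B_r)$ is a genuine $T$-cut (note $t\notin B_r$, so $B_r$ is a proper nonempty subset). Moreover any edge of $\delta(B_r)$ must join a vertex at distance exactly $r$ from $s$ to one at distance exactly $r+1$, so the cuts $\delta(B_0),\dots,\delta(B_{d-1})$ are pairwise edge-disjoint. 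This already yields $\nu(G,T)\ge d$, matching the $d$ edges that a shortest $s$--$t$ path contributes to a minimum $T$-join.

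It remains to recurse on the terminal set $T\triangle\{s,t\}=T\setminus\{s,t\}$ and to concatenate the two packings, and this is the step I expect to be the main obstacle. It is precisely here that bipartiteness is indispensable: I must produce a reduced bipartite instance whose minimum $T$-join drops by $d$ and whose $T$-cuts can be packed \emph{disjointly} from the $d$ distance-balls above. For non-bipartite graphs no such clean reduction exists — already for $K_4$ with $T=V(K_4)$ one has $\tau=2$ but $\nu=1$, the two forced star-cuts being unavoidably overlapping — so any valid reduction must exploit the absence of odd cycles to keep the level-cuts from colliding with the inductively constructed ones. Concretely I would delete the edges of a shortest $s$--$t$ path (a deletion, so bipartiteness is automatically preserved) and then verify that the minimum $T$-join and the min-max relation behave as required under this operation, iterating until $T$ is exhausted; the delicate point is controlling how the inductive cuts interact with the already-reserved balls $B_r$. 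An alternative, and perhaps cleaner, route is to reformulate the whole statement as the total dual integrality of the $T$-cut covering system for bipartite $G$, and to use the bipartite structure to round a maximum fractional packing to an integral one.
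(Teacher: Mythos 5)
The paper does not prove this statement at all: it is quoted verbatim from Seymour's 1981 paper and used as a black box, so there is no internal proof to compare against. Judged on its own terms, your attempt has a genuine gap at exactly the point you flag yourself. The easy direction $\nu\le\tau$, the reduction to connected graphs, the degree-one reductions, and the observation that in a bipartite graph the distance levels from $s$ give $d=\dist_G(s,t)$ pairwise disjoint $T$-cuts $\delta(B_0),\dots,\delta(B_{d-1})$ (disjoint because every edge of a bipartite graph joins consecutive distance classes) are all correct. But the theorem is not ``$\nu\ge d$ for one closest pair''; it is a global min--max identity, and the entire difficulty lies in the recursion you defer. Deleting a shortest $s$--$t$ path $P$ does give $\tau(G-E(P),T\setminus\{s,t\})\ge\tau(G,T)-d$, but the cuts returned by induction are sets $\delta_{G-E(P)}(X)$; reinterpreted in $G$ they may fail to be $T$-cuts (the parity of $|T\cap X|$ changes whenever $X$ separates $s$ from $t$), they may contain edges of $P$, and they may meet the reserved balls $B_r$. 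Nothing in your sketch controls any of these three failures, and no greedy pairing of terminals by distance is compatible with a minimum-weight pairing in general, so the concatenation step is not ``delicate'' but simply absent. Your fallback--total dual integrality of the $T$-cut system plus rounding a fractional optimum--is likewise only named, not executed; for general graphs the LP optimum is half-integral and the content of Seymour's theorem is precisely that bipartiteness forces integrality, which again is the step you would have to prove. Your $K_4$ example correctly shows the hypothesis is needed, but identifying where a hypothesis must be used is not the same as using it. As it stands the proposal proves $\nu(G,T)\ge\dist_G(s,t)$ for a closest terminal pair and nothing more; since the result is deep and is attributed to Seymour in the paper, citing it (as the authors do) or reproducing a complete known proof (e.g.\ Seb\H o's) is the appropriate course.
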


A family of sets $\mathcal F$ is said to be \emph{laminar} if, for every pair $X,Y \in \mathcal F$, either $X \subseteq Y$, $Y \subseteq X$, or $X \cap Y = \emptyset$. A packing of $T$-cuts $\delta(\mathcal F)$ is said to be laminar if $\mathcal F$ is laminar. A $T$-cut $\delta(X)$ is \emph{inclusion-wise minimal} if no $T$-cut is properly contained in $\delta(X)$. The following proposition can be found in~\cite{FHRV07}.

\begin{proposition}\label{prop:laminar}
For every bipartite graph $G$ and every subset $T \subseteq V(G)$ such that $|T|$ is even, there exists an optimal packing of $T$-cuts in $G$ which is laminar and consists only of inclusion-wise minimal $T$-cuts.
\end{proposition}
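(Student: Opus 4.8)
The plan is to start from an optimal packing, whose existence is guaranteed by Seymour's theorem (Theorem~\ref{thm:seymour}), and then massage it into the desired form by two successive optimizations: a global minimization of the number of edges used, which will force inclusion-wise minimality of every cut, followed by an uncrossing argument to enforce laminarity while preserving everything gained.

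First I would, among all packings of $T$-cuts of maximum size $\nu(G,T)$, choose one, say $\delta(\mathcal F)$, that minimizes the total number of edges used, $\sum_{X\in\mathcal F}|\delta(X)|=\bigl|\bigcup_{X\in\mathcal F}\delta(X)\bigr|$. Every cut in such a packing is automatically inclusion-wise minimal: if some $\delta(X)$ properly contained a $T$-cut $\delta(Y)$, then replacing $\delta(X)$ by $\delta(Y)$ keeps the collection disjoint (since $\delta(Y)\subseteq\delta(X)$) and of the same size, while strictly decreasing the edge count, contradicting minimality. So this single optimization secures the minimality requirement once and for all, provided the next stage does not increase the edge count.

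Next I would uncross. Suppose two members $\delta(X),\delta(Y)$ cross, and partition the vertices into $A=X\cap Y$, $B=X\setminus Y$, $C=Y\setminus X$, $D=V(G)\setminus(X\cup Y)$. Edge-disjointness of the packing removes exactly the edges of $\delta(X)\cap\delta(Y)$, namely those joining $A$ to $D$ or $B$ to $C$; with these absent, one checks that $\delta(X\cap Y)\cup\delta(X\cup Y)$ and $\delta(X\setminus Y)\cup\delta(Y\setminus X)$ both equal $\delta(X)\cup\delta(Y)$ exactly. A parity count on $T$ then shows that if $|T\cap(X\cap Y)|$ is odd then $\delta(X\cap Y)$ and $\delta(X\cup Y)$ are both $T$-cuts, whereas if it is even then $\delta(X\setminus Y)$ and $\delta(Y\setminus X)$ are both $T$-cuts; in either case I can replace the crossing pair by a non-crossing pair of $T$-cuts using exactly the same edges. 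This reuse of edges is the crucial point: it keeps the collection disjoint from the remaining cuts, keeps the size at $\nu(G,T)$, and leaves the total edge count unchanged, so the packing stays edge-minimizing and hence, by the previous paragraph, stays a packing of inclusion-wise minimal cuts.

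The hard part will be termination of the uncrossing, because the two moves pull in opposite directions: replacing $\{X,Y\}$ by $\{X\cap Y,X\cup Y\}$ preserves $\sum_{X}|X|$ but increases $\sum_{X}|X|^2$, while replacing by $\{X\setminus Y,Y\setminus X\}$ strictly decreases $\sum_{X}|X|$. I would therefore use the lexicographic monovariant $\bigl(\sum_{X}|X|,\,-\sum_{X}|X|^2\bigr)$: every move strictly decreases it, and since both coordinates range over a bounded set of integers the process halts, necessarily at a laminar family. One should also verify that the degenerate cases cannot occur: crossing forces $A,B,C\neq\emptyset$, and the parity count forces $D\neq\emptyset$ in the intersection/union case, so no replacement cut is empty and $|\mathcal F|$ stays fixed throughout. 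The output is then an optimal, edge-minimizing, hence inclusion-wise minimal, and laminar packing, as required.
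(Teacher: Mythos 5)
The paper itself gives no proof of Proposition~\ref{prop:laminar}---it is quoted from~\cite{FHRV07}---so there is no in-house argument to compare against; your write-up is a correct, self-contained version of the standard uncrossing argument. The two-stage optimisation works exactly as you describe: minimising $\sum_{X\in\mathcal F}|\delta(X)|$ over all maximum packings forces inclusion-wise minimality; the parity of $|T\cap X\cap Y|$ correctly decides which of the two uncrossed pairs consists of $T$-cuts; the identity $\delta(X\cap Y)\cup\delta(X\cup Y)=\delta(X\setminus Y)\cup\delta(Y\setminus X)=\delta(X)\cup\delta(Y)$ (valid because disjointness of the packing kills the edges between $X\cap Y$ and $V\setminus(X\cup Y)$ and between $X\setminus Y$ and $Y\setminus X$) shows that both the packing property and the total edge count survive each exchange, so minimality is never lost; and the lexicographic potential $\bigl(\sum_X|X|,\,-\sum_X|X|^2\bigr)$ terminates, since the union/intersection move fixes the first coordinate while strictly spreading the two cardinalities, and the difference move strictly decreases the first coordinate because $X\cap Y\neq\emptyset$. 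Two minor remarks. First, the existence of an optimal packing is immediate by finiteness and owes nothing to Seymour's theorem; in fact your argument never uses bipartiteness, so you have proved the proposition for arbitrary graphs (bipartiteness is needed only for the min--max equality of Theorem~\ref{thm:seymour}). Second, your justification that no replacement cut is empty argues from nonemptiness of the vertex classes, which does not quite suffice in a disconnected graph; the clean fix is to observe that under the standing assumption that every component of $G$ meets $T$ in an even number of vertices (without which no $T$-join exists), the empty edge set is not a $T$-cut, and your replacement cuts are $T$-cuts by the parity count, hence nonempty. In the paper's application the graphs $G'$ and $G^{\vartriangle}$ are connected, so this is harmless.
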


Let us remark that the problem of finding a minimum $T$-join is equivalent to the minimum weighted matching problem, which can be solved efficiently using Edmonds' weighted matching algorithm. The problem of finding a maximum packing of $T$-cuts may be considered as the dual problem in the sense of linear programming. Using Theorem~\ref{thm:seymour} and Proposition~\ref{prop:laminar}, it can be shown (see e.g.~\cite{CCPS98}) that there exists an optimal solution of the dual linear program which is half-integral and laminar. Intuitively, this would correspond to a packing of $T$-cuts where the $T$-cuts consist of `half-edges'. This idea was used, in conjunction with the Four Colour Theorem, by Kr\'al' and Voss~\cite{KrVo04} to show that if $G$ is a planar graph and $T\subseteq V(G)$ is the set of odd-degree vertices of $G$, then $\tau(G,T) \leq 2\nu(G,T)$.

Our approach is similar, but rather than dealing with half-edges, we consider a suitable transformation of the graph $G$. Namely, given a plane triangulation $G$, construct the graph $G'$ by subdividing the edges of $G$, that is, replacing the edges of $G$ by internally disjoint paths of length $2$; the graph $G'$ is clearly bipartite. Now construct the graph $G^{\vartriangle}$ from $G'$ by adding three new edges inside every face of $G'$, incident to the three vertices of degree $2$, as shown in Figure~\ref{fig:refinement}. We call $G^{\vartriangle}$ a \emph{refinement} of $G$. Observe that all the vertices in $V(G^{\vartriangle})-V(G)$ have degree $6$ in $G^{\vartriangle}$, so if $T$ is the set of odd-degree vertices of $G$, then $T$ is also the set of odd-degree vertices of $G^{\vartriangle}$.

\begin{figure}
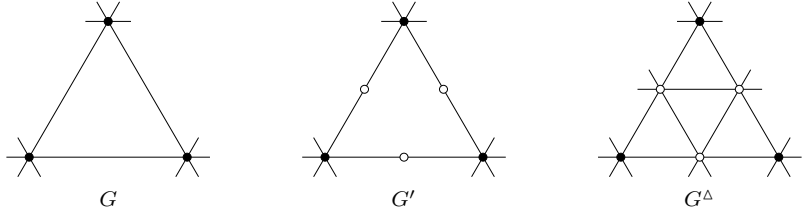

\centering
\null\hfill
\subfloat[$G$]{\label{fig:triangulation}
\begin{tikzgraph}[scale=0.6,ultra thin]
\path (90:2) coordinate (a1);
\path (210:2) coordinate (a2);
\path (330:2) coordinate (a3);

\draw (a1)--(a2)--(a3)--cycle;

\foreach\i in {1,...,3}
{
  \draw (a\i) node[vertex] {};
}
\foreach\i in {0,...,3}
{
  \draw (a1)--+(60*\i:0.5);
}
\foreach\i in {2,...,5}
{
  \draw (a2)--+(60*\i:0.5);
}
\foreach\i in {4,...,7}
{
  \draw (a3)--+(60*\i:0.5);
}
\end{tikzgraph}
}
\hfill
\subfloat[$G'$]{\label{fig:subdivision}
\begin{tikzgraph}[scale=0.6,ultra thin]
\path (90:2) coordinate (a1);
\path (210:2) coordinate (a2);
\path (330:2) coordinate (a3);
\path (150:1) coordinate (s1);
\path (270:1) coordinate (s2);
\path (30:1) coordinate (s3);

\draw (a1)--(a2)--(a3)--cycle;

\foreach\i in {1,...,3}
{
  \draw (a\i) node[vertex] {};
}
\foreach\i in {0,...,3}
{
  \draw (a1)--+(60*\i:0.5);
}
\foreach\i in {2,...,5}
{
  \draw (a2)--+(60*\i:0.5);
}
\foreach\i in {4,...,7}
{
  \draw (a3)--+(60*\i:0.5);
}
\foreach\i in {1,...,3}
{
  \draw (s\i) node[subdivision] {};
}
\end{tikzgraph}
}
\hfill
\subfloat[$G^{\vartriangle}$]{\label{fig:refinement2}
\begin{tikzgraph}[scale=0.6,ultra thin]
\path (90:2) coordinate (a1);
\path (210:2) coordinate (a2);
\path (330:2) coordinate (a3);
\path (150:1) coordinate (s1);
\path (270:1) coordinate (s2);
\path (30:1) coordinate (s3);

\draw (a1)--(a2)--(a3)--cycle;
\draw (s1)--(s2)--(s3)--cycle;

\foreach\i in {1,...,3}
{
  \draw (a\i) node[vertex] {};
}
\foreach\i in {0,...,3}
{
  \draw (a1)--+(60*\i:0.5);
}
\foreach\i in {2,...,5}
{
  \draw (a2)--+(60*\i:0.5);
}
\foreach\i in {4,...,7}
{
  \draw (a3)--+(60*\i:0.5);
}
\foreach\i in {0,...,1}
{
  \draw (s1)--+(120+60*\i:0.5);
}
\foreach\i in {2,...,3}
{
  \draw (s2)--+(120+60*\i:0.5);
}
\foreach\i in {0,...,1}
{
  \draw (s3)--+(60*\i:0.5);
}
\foreach\i in {1,...,3}
{
  \draw (s\i) node[subdivision] {};
}
\end{tikzgraph}
}
\hfill\null
\caption{A face of a triangulation $G$, its subdivision $G'$, and its refinement $G^{\vartriangle}$.}
\label{fig:refinement}
\end{figure}

\begin{lemma}\label{lem:refinement}
For every planar triangulation $G$ and every subset $T \subseteq V(G)$ such that $|T|$ is even, $\tau(G,T) =\frac12\nu(G^{\vartriangle},T)$. Moreover, there exists an optimal laminar packing of inclusion-wise minimal $T$-cuts in $G^{\vartriangle}$.
\end{lemma}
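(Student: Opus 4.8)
The plan is to prove the two inequalities $\nu(G^{\vartriangle},T)\le 2\tau(G,T)$ and $\nu(G^{\vartriangle},T)\ge 2\tau(G,T)$ separately, the first being routine and the second carrying all the difficulty; the ``moreover'' clause will then be read off from the construction used for the second inequality.

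For the upper bound I would combine weak duality with a direct image of a minimum $T$-join. Given a minimum $T$-join $J$ of $G$, replacing each edge $uv\in J$ by the two edges $u s_{uv}$ and $s_{uv}v$ of $G^{\vartriangle}$ produces a set $J'\subseteq E(G^{\vartriangle})$ in which every subdivision vertex has even degree and every old vertex keeps the same parity as in $G[J]$; hence $J'$ is a $T$-join of $G^{\vartriangle}$ with $|J'|=2\tau(G,T)$. Since every $T$-cut meets every $T$-join, $\nu(G^{\vartriangle},T)\le\tau(G^{\vartriangle},T)\le |J'| = 2\tau(G,T)$.

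For the lower bound I would pass through the bipartite subdivision $G'$. As $G'$ is a pure subdivision of $G$, its shortest-path metric satisfies $\dist_{G'}(u,v)=2\dist_G(u,v)$ for all $u,v\in V(G)$, and since (as is standard) $\tau(\,\cdot\,,T)$ equals the minimum weight of a perfect matching of $T$ in the shortest-path metric, we get $\tau(G',T)=2\tau(G,T)$. Applying Theorem~\ref{thm:seymour} and Proposition~\ref{prop:laminar} to the bipartite graph $G'$ then yields a laminar packing $\delta_{G'}(\mathcal F)$ of $2\tau(G,T)$ inclusion-wise minimal $T$-cuts. The key observation is that, because every subdivision vertex has degree $2$ in $G'$, minimality forces each $s_e$ (with $e=uv$) to share a side with at least one of $u,v$: otherwise deleting $s_e$ from, or adding it to, $X$ would remove the two edges $us_e,s_ev$ without changing the parity of $|T\cap X|$. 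Consequently a minimal $T$-cut $\delta_{G'}(X)$ crosses exactly one of $us_e,s_ev$ precisely when $u,v$ are separated, so it projects onto the $T$-cut $\delta_G(X\cap V(G))$. Thus the packing in $G'$ is exactly a family of $T$-cuts of $G$ in which each edge $uv$ is used at most twice, once via $us_e$ and once via $s_ev$ — the integral incarnation of the half-edge picture described before the lemma.

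It remains to transfer this packing to $G^{\vartriangle}$ while keeping the cuts edge-disjoint, and I expect this to be the main obstacle. Keeping the same vertex sets $X$, each $\delta_{G^{\vartriangle}}(X)$ is still a $T$-cut (the new vertices lie outside $T$) and the subdivision edges it uses coincide with $\delta_{G'}(X)$, hence remain disjoint across $\mathcal F$; the only possible collisions are on the inner-triangle edges $s_is_j$, which a cut uses exactly when it separates $s_i$ from $s_j$. To resolve these I would argue face by face: within each hexagonal face of $G'$ the laminar family appears as pairwise non-crossing dual arcs entering and leaving through the six boundary edges, and I must route these arcs across the inner triangle — using its three edges and the freedom to place each separated $s_e$ on either side — so that no inner edge is used twice. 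This is a small planar edge-disjoint-paths feasibility problem in the fixed gadget of Figure~\ref{fig:refinement}, and the non-crossing (laminar) structure of the arcs together with a parity count should guarantee a feasible routing in every face. Carrying this out independently in each face yields $2\tau(G,T)$ edge-disjoint $T$-cuts in $G^{\vartriangle}$, giving $\nu(G^{\vartriangle},T)\ge 2\tau(G,T)$ and hence equality. Finally, since the routing preserves the nesting of the curves and any non-minimal $T$-cut may be replaced by an inclusion-wise minimal one contained in it without destroying disjointness or laminarity, the resulting optimal packing can be taken laminar and minimal, which is the ``moreover'' assertion.
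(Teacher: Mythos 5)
Your skeleton coincides with the paper's: the chain $\tau(G,T)\le\tfrac12\tau(G',T)=\tfrac12\nu(G',T)\le\tfrac12\nu(G^{\vartriangle},T)\le\tau(G,T)$, with Seymour's theorem and Proposition~\ref{prop:laminar} applied to the bipartite subdivision $G'$, and all the difficulty concentrated in the step $\nu(G',T)\le\nu(G^{\vartriangle},T)$. But that step is exactly where your argument stops being a proof. You reduce it to a ``small planar edge-disjoint-paths feasibility problem'' in each face and assert that laminarity ``together with a parity count should guarantee a feasible routing''; no such count is given, and the reduction itself is flawed. The routing cannot be carried out ``independently in each face'': a subdivision vertex $s_e$ lies on the boundary of the \emph{two} hexagonal faces coming from the two triangles of $G$ containing $e$, so deciding on which side of a cut $\delta(X)$ it sits is a single choice that simultaneously affects the inner-triangle edges of both faces. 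Worse, flipping $s_e$ across $\delta(X)$ also swaps which of the two subdivision edges $us_e$, $s_ev$ belongs to $\delta_{G'}(X)$, and the edge you pick up may already be used by another $T$-cut of the packing (one separating $u$ from $s_e$); your face-local routing has no mechanism for preventing these new collisions on the edges of $G'$. So the heart of the lemma is missing, and the ``moreover'' clause, which you read off from the routing, falls with it (note also that replacing a cut by an inclusion-wise minimal $T$-cut contained in it preserves disjointness but not obviously the laminarity of the underlying vertex sets).

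The paper avoids routing altogether by a global extremal choice: among all optimal laminar packings of inclusion-wise minimal $T$-cuts in $G'$, it fixes a family $\mathcal F$ minimising $\sum_{X\in\mathcal F}|\delta_{G'}(X)|$ and shows that this \emph{same} family, read in $G^{\vartriangle}$, is already a packing. If two cuts collided on an inner-triangle edge $x_1x_2$ with $x_i\in X_i$, laminarity forces $X_1\cap X_2=\emptyset$; since $x_1$ and $x_2$ have a common neighbour in $V(G)$, some $x_i$ has a neighbour outside $X_i$, and deleting that $x_i$ from $X_i$ contradicts the extremal choice. If you want to salvage your approach, replace the per-face feasibility claim by such a global minimisation (or genuinely solve the routing problem with the inter-face coupling taken into account); as written, the inequality $\nu(G^{\vartriangle},T)\ge 2\tau(G,T)$ is unproved.
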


\begin{proof}
Let $G'$ be the subgraph obtained from $G$ by subdividing every edge of $G$. For the first part, it suffices to prove the chain of inequalities
\begin{equation*}\label{eq:tau=nu/2}
\tau(G,T) \leq \tfrac12\tau(G',T) \leq \tfrac12\nu(G',T) \leq \tfrac12\nu(G^{\vartriangle},T) \leq \tau(G,T).
\end{equation*}
Clearly, any $T$-join $J'$ of $G'$ corresponds to a $T$-join $J$ of $G$ such that $|J|=\frac12|J'|$, so $\tau(G,T) \leq \frac12\tau(G',T)$. The second inequality $\tau(G',T)\leq \nu(G',T)$ holds by Theorem~\ref{thm:seymour}. To prove the final inequality $\frac12\nu(G^{\vartriangle},T) \leq \tau(G,T)$, observe that any $T$-join $J$ of $G$ corresponds to a $T$-join $J^{\vartriangle}$ of $G^{\vartriangle}$ such that $|J|=\frac12|J^{\vartriangle}|$. Hence, $\tau(G,T) \geq \frac12\tau(G^{\vartriangle},T) \geq \frac12\nu(G^{\vartriangle},T)$.

It remains to prove the third inequality, namely $\nu(G',T) \leq \nu(G^{\vartriangle},T)$. Let $\mathcal F$ be a laminar family on $V(G')$ minimising $\sum_{X \in \mathcal F}|\delta_{G'}(X)|$, such that $\delta_{G'}(\mathcal F)$ is an optimal packing of inclusion-wise minimal $T$-cuts in $G'$; such a family exists by Proposition~\ref{prop:laminar}. Suppose $\delta_{G^{\vartriangle}}(\mathcal F)$ is not a packing of $T$-cuts in $G^{\vartriangle}$. Then there exist $X_1, X_2 \in \mathcal F$ and an edge $e \in E(G^{\vartriangle}) - E(G')$ such that $e \in \delta_{G^{\vartriangle}}(X_1) \cap \delta_{G^{\vartriangle}}(X_2)$. Therefore $e=x_1x_2$, where $x_1$ and $x_2$ are vertices of $V(G')-V(G)$. By the laminarity of $\mathcal F$, $X_1 \cap X_2 = \emptyset$. Therefore, there exists $i \in \{1,2\}$ such that $x_i$ has a neighbour in $V(G') - X_i$. But then $\delta_{G'}(X_i-\{x_i\})$ is a $T$-cut in $G'$ which is disjoint from all other $T$-cuts of $\delta_{G'}(\mathcal F)$, and $|\delta_{G'}(X_i-\{x_i\})| < |\delta_{G'}(X_i)|$, contradicting the minimality of $\sum_{X \in \mathcal F}|\delta_{G'}(X)|$. Hence, $\delta_{G^{\vartriangle}}(\mathcal F)$ is a laminar packing of $T$-cuts in $G^{\vartriangle}$, so $\nu(G',T) \leq \nu(G^{\vartriangle},T)$.

For the `moreover' part, simply note that the packing $\delta_{G^{\vartriangle}}(\mathcal F)$ from the previous paragraph is an optimal laminar packing of inclusion-wise minimal $T$-cuts in $G^{\vartriangle}$.
\end{proof}

\section{Patches and moats}\label{sec:patches}

Throughout this section, $G$ is a plane triangulation with all vertices of degree $5$ and $6$, and $T$ is the set of $5$-vertices of $G$. A $2$-connected subgraph $H \subset G$ such that all faces of $H$, except the outer face, are triangles, is called a \emph{patch} of $G$. If $C$ is the outer cycle of $H$, and the number of vertices in $T\cap V(H-C)$ is $p$, then $H$ is a \emph{$p$-patch}. We define the \emph{area} $A(H)$ as the number of triangles in $H$. An example of a $3$-patch is shown in Figure~\ref{fig:patch-moat}.

Every $p$-patch with $1 \leq p \leq 5$ satisfies the following isoperimetric inequality, which is an immediate corollary of a more general theorem of Justus~\cite[Theorem~3.3.2]{Jus07}.

\begin{theorem}[Justus~\cite{Jus07}]\label{thm:justus}
Let $G$ be a plane triangulation with all vertices of degree $5$ and $6$, and let $T$ be the set of the $5$-vertices of $G$. If $H \subseteq G$ is a $p$-patch with outer cycle $C$, and $1 \leq p \leq 5$, then
\[
|V(C)| \geq \sqrt{(6-p)A(H)}.
\]
If equality holds, then $p=1$.
\end{theorem}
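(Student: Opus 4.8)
The plan is to prove the inequality by induction, peeling off the outer cycle $C$ one layer at a time and controlling how the perimeter $\ell:=|V(C)|$ and the area $A(H)$ change under a single peel. Two preliminary facts set the stage. First, for any triangulated disc Euler's formula together with the edge count $2|E(H)|=3A(H)+\ell$ yields the counting identity
\[
A(H)=2\bigl(|V(H)|-\ell\bigr)+\ell-2,
\]
which holds irrespective of the degrees. Second, a discrete Gauss--Bonnet argument (assigning the angle $\pi/3$ to every corner of every triangle) identifies the total interior curvature of $H$ with $p\cdot\tfrac{\pi}{3}$: an interior $6$-vertex is flat, an interior $5$-vertex carries curvature $\pi/3$, and the $5$-vertices lying on $C$ are excluded since they do not contribute to $p$. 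The degree restriction therefore enters the argument only through this curvature.

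Now delete $V(C)$ to obtain a smaller patch $H'$ with outer cycle $C'$, perimeter $\ell':=|V(C')|$, and $p'$ interior $5$-vertices, so that $p-p'$ of the $5$-vertices counted by $p$ lie on $C'$ and $p'\le p$. The region $R$ between $C$ and $C'$ is a triangulated annulus whose triangles split into those with an edge on $C$ and those with an edge on $C'$; counting boundary edges gives $\ell$ of the former and $\ell'$ of the latter, whence $A(H)=A(H')+\ell+\ell'$. The heart of the matter is the \emph{perimeter-drop identity} $\ell-\ell'=6-p$, which I would establish by double counting the corners of the triangles of $R$ that lie on $C'$. Geometrically this count is $\ell+2\ell'$, since each of the $\ell$ triangles of the first type contributes one such corner and each of the $\ell'$ of the second type contributes two. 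On the other hand it equals $\sum_{u\in V(C')}\bigl(d_H(u)-b(u)\bigr)$, where $b(u)$ is the number of triangles of $H'$ incident with $u$; using $\sum_u d_H(u)=6\ell'-(p-p')$ (legitimate because the vertices of $C'$ are interior to $H$ and hence keep their full degree) and, from Gauss--Bonnet applied to the disc $H'$, $\sum_u b(u)=3\ell'-6+p'$, the two evaluations combine to give exactly $\ell-\ell'=6-p$.

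With these two facts the induction is pure algebra. By the inductive hypothesis applied to $H'$, and because $6-p'\ge 6-p$,
\[
\ell'^2\ge(6-p')A(H')\ge(6-p)A(H').
\]
Since $\ell-\ell'=6-p$ and $A(H)-A(H')=\ell+\ell'$,
\[
\ell^2-\ell'^2=(\ell-\ell')(\ell+\ell')=(6-p)\bigl(A(H)-A(H')\bigr),
\]
and adding the two displays gives $\ell^2\ge(6-p)A(H)$, completing the inductive step. The small patches that serve as base cases are checked directly.

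I expect the main obstacle to be the rigorous justification of a single peel: the clean picture above presupposes that $H'$ is again a $2$-connected patch, that $C'$ is a single cycle, and that the annulus $R$ has no vertices in its interior. In reality, deleting $V(C)$ may disconnect $H$, create a cut vertex, or leave chords, and the real work lies in choosing the layer carefully (or peeling in smaller increments) and verifying that the inequality $\ell-\ell'\ge 6-p$ persists in these degenerate configurations. The equality analysis then follows by running the induction backwards: equality in $\ell^2=(6-p)A(H)$ forces equality at every layer and $p'=p$ at every peel, so no $5$-vertex ever lies on an intermediate cycle and all defects are concentrated at the centre; a single central vertex can carry only the curvature $\pi/3$ compatible with the resulting cone, which is possible precisely when $p=1$.
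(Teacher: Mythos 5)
First, note that the paper does not prove this statement at all: it is imported verbatim from Justus's thesis \cite{Jus07} as a corollary of his Theorem~3.3.2, so there is no in-paper proof to compare against. What the paper does contain is the closely related computation in Lemma~\ref{lem:perimeter}, which runs your layer argument in the \emph{easy, outward} direction (growing a patch into a region guaranteed to contain no $5$-vertices, where each new layer is automatically a clean annulus); your identities $\ell-\ell'=6-p$ and $A(H)-A(H')=\ell+\ell'$ are exactly the relations $|N(X)|=|V(C)|+6-p$ and $|\delta(N^k[X])|=2|V(C)|+(6-p)(2k+1)$ from that lemma, read inward instead of outward, and your double-count of corners on $C'$ is a correct derivation of them in the clean case. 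The inductive algebra $\ell^2-\ell'^2=(6-p)(A(H)-A(H'))$ combined with $\ell'^2\ge(6-p')A(H')\ge(6-p)A(H')$ is also fine (though you should state the $p'=0$ case of the hypothesis explicitly, since the theorem as given starts at $p=1$, and the induction can drive you into a $0$-patch).

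The genuine gap is the one you name yourself and then set aside: for \emph{inward} peeling nothing guarantees that $H-V(C)$ is a $2$-connected patch with a single outer cycle, nor that the stripped region $R$ is a triangulated annulus in which every triangle has exactly one edge on $C$ or on $C'$. Chords of $C$, triangles with all three vertices on $C$, cut vertices, and disconnection all break the corner count $\ell+2\ell'$ and the identity $A(H)-A(H')=\ell+\ell'$ simultaneously, and one must then check that the errors in the two identities combine favourably rather than separately. This is not a routine verification to be deferred --- it is precisely where the content of Justus's theorem lies (his proof, and the related work of Bornh\"oft, Brinkmann and Greinus on patches with short boundaries, consists largely of this case analysis), and it is presumably why the present paper only ever grows patches outward, where the degeneracies cannot occur, and cites \cite{Jus07} for the inward-facing inequality. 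The base cases of the induction and the terminal configurations of the peeling (a single vertex, an edge, a path, a chordal triangle) are likewise left unspecified, and the equality analysis inherits the same incompleteness. So the skeleton is the right one, but as written this is a proof sketch of the non-degenerate case rather than a proof of the theorem.
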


A \emph{moat} of width $k$ in $G$ surrounding $X \subseteq V(G)$ is a subset of $E(G)$ defined as
\[
\delta_G^k(X)=\bigcup_{i=0}^{k-1}\delta_G\left(N^i[X]\right).
\]
In particular, $\delta_G^1(X)=\delta_G(X)$. If $|T\cap X|=p$, then $\delta_G^k(X)$ is a \emph{$p$-moat} of width $k$. See Figure~\ref{fig:patch-moat} for an example of a $3$-moat of width $2$. If $u \in T$, the $1$-moat $\delta_G^k(\{u\})$ is simply denoted by $\delta_G^k(u)$, and is called a \emph{disk} of radius $k$ centred on $u$. To every moat $\delta_G^k(X)$ corresponds a set of $|\delta_G^k(X)|$ faces, namely the faces incident to at least one edge of $\delta_G^k(X)$. We say that these faces are \emph{spanned} by $\delta_G^k(X)$.

\begin{figure}
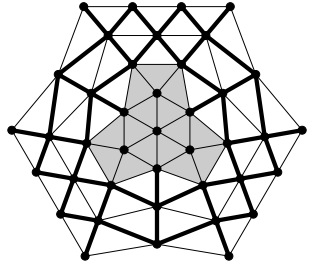

\centering

\begin{tikzgraph}[scale=0.5,ultra thin]

\path (90:1) coordinate (a1);
\path (150:1) coordinate (a2);
\path (210:1) coordinate (a3);
\path (270:1) coordinate (a4);
\path (330:1) coordinate (a5);
\path (30:1) coordinate (a6);

\path (a1)+(50:1) coordinate (b1);
\path (a1)+(130:1) coordinate (b2);
\path (a3)+(170:1) coordinate (b3);
\path (a3)+(250:1) coordinate (b4);
\path (a5)+(290:1) coordinate (b5);
\path (a5)+(370:1) coordinate (b6);

\path (b1)+(50:1) coordinate (c1);
\path (b1)+(130:1) coordinate (c2);
\path (b2)+(130:1) coordinate (c3);
\path (a2)+(150:1) coordinate (c4);
\path (b3)+(170:1) coordinate (c5);
\path (b3)+(250:1) coordinate (c6);
\path (b4)+(250:1) coordinate (c7);
\path (a4)+(270:1) coordinate (c8);
\path (b5)+(290:1) coordinate (c9);
\path (b5)+(10:1) coordinate (c10);
\path (b6)+(370:1) coordinate (c11);
\path (a6)+(30:1) coordinate (c12);

\path (c1)+(50:1) coordinate (d1);
\path (c2)+(50:1) coordinate (d2);
\path (c2)+(130:1) coordinate (d3);
\path (c3)+(130:1) coordinate (d4);
\path (c4)+(150:1) coordinate (d5);
\path (c5)+(170:1) coordinate (d6);
\path (c6)+(170:1) coordinate (d7);
\path (c6)+(250:1) coordinate (d8);
\path (c7)+(250:1) coordinate (d9);
\path (c8)+(270:1) coordinate (d10);
\path (c9)+(290:1) coordinate (d11);
\path (c10)+(290:1) coordinate (d12);
\path (c10)+(10:1) coordinate (d13);
\path (c11)+(10:1) coordinate (d14);
\path (c12)+(30:1) coordinate (d15);

\path[fill=black!20] (b1)--(b2)--(a2)--(b3)--(b4)--(a4)--(b5)--(b6)--(a6)--cycle;

\draw (0,0)--(a1)
      (0,0)--(a2)
      (0,0)--(a3)
      (0,0)--(a4)
      (0,0)--(a5)
      (0,0)--(a6);

\draw (a1)--(b1)
      (a1)--(b2)
      (a3)--(b3)
      (a3)--(b4)
      (a5)--(b5)
      (a5)--(b6);

\draw (a1)--(a2)--(a3)--(a4)--(a5)--(a6)--cycle;
\draw (b1)--(b2)--(a2)--(b3)--(b4)--(a4)--(b5)--(b6)--(a6)--cycle;
\draw (c1)--(c2)--(c3)--(c4)--(c5)--(c6)--(c7)--(c8)--(c9)--(c10)--(c11)--(c12)--cycle;
\draw (d1)--(d2)--(d3)--(d4)--(d5)--(d6)--(d7)--(d8)--(d9)--(d10)--(d11)--(d12)--(d13)--(d14)--(d15)--cycle;
\draw[ultra thick] 
      (b1)--(c1)
      (b1)--(c2)
      (b2)--(c2)
      (b2)--(c3)
      (b2)--(c4)
      (a2)--(c4)
      (b3)--(c4)
      (b3)--(c5)
      (b3)--(c6)
      (b4)--(c6)
      (b4)--(c7)
      (b4)--(c8)
      (a4)--(c8)
      (b5)--(c8)
      (b5)--(c9)
      (b5)--(c10)
      (b6)--(c10)
      (b6)--(c11)
      (b6)--(c12)
      (a6)--(c12)
      (b1)--(c12)
      
      (c1)--(d1)
      (c1)--(d2)
      (c2)--(d2)
      (c2)--(d3)
      (c3)--(d3)
      (c3)--(d4)
      (c3)--(d5)
      (c4)--(d5)
      (c5)--(d5)
      (c5)--(d6)
      (c5)--(d7)
      (c6)--(d7)
      (c6)--(d8)
      (c7)--(d8)
      (c7)--(d9)
      (c7)--(d10)
      (c8)--(d10)
      (c9)--(d10)
      (c9)--(d11)
      (c9)--(d12)
      (c10)--(d12)
      (c10)--(d13)
      (c11)--(d13)
      (c11)--(d14)
      (c11)--(d15)
      (c12)--(d15)
      (c1)--(d15)      
      ;

\draw (0,0) node[vertex] {};

\foreach\i in {1,3,5}
{
  \draw (a\i) node[vertex] {};
}

\foreach\i in {2,4,6}
{
  \draw (a\i) node[vertex] {};
}

\foreach\i in {1,...,6}
{
  \draw (b\i) node[vertex] {};
}

\foreach\i in {1,...,12}
{
  \draw (c\i) node[vertex] {};
}

\foreach\i in {1,...,15}
{
  \draw (d\i) node[vertex] {};
}
\end{tikzgraph}

\caption{A $3$-patch (shaded in grey) surrounded by a $3$-moat of width $2$ (shown by the thick edges).}
\label{fig:patch-moat}
\end{figure}

The number of edges in a disk is easy to determine.

\begin{lemma}
\label{lem:disk}
Let $G$ be a plane triangulation with all vertices of degree $5$ and $6$, and $T$ the set of $5$-vertices of $G$. If $u \in T$, and no edge of $\delta^{k-1}(u)$ is incident to a vertex of $T-\{u\}$, then $\left|\delta_G^k(u)\right| = 5k^2$.
\end{lemma}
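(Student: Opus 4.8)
The plan is to peel the disk $\delta_G^k(u)$ into concentric rings and count the edges ring by ring. Write $n_i = |N^i(u)|$ for the number of vertices at distance exactly $i$ from $u$, and $e_i = |\delta_G(N^i[u])|$ for the number of edges joining $N^i(u)$ to $N^{i+1}(u)$; since an edge changes the distance to $u$ by at most one, these are precisely the edges with exactly one end in the ball $N^i[u]$, so that $|\delta_G^k(u)| = \sum_{i=0}^{k-1} e_i$. The hypothesis that no edge of $\delta^{k-1}(u)$ meets $T-\{u\}$ says exactly that every vertex of $N^{k-1}[u]$ other than $u$ has degree $6$.

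The structural heart of the argument is to show, by induction on $i$ for $1 \le i \le k$, that the ball $B_i := G[N^i[u]]$ is a patch whose outer boundary is a chordless cycle $C_i$ with $V(C_i) = N^i(u)$, and whose interior vertex set is exactly $N^{i-1}[u]$. The base case $i=1$ is the wheel on $u$ and its five neighbours, whose link is the $5$-cycle $C_1$. For the inductive step one attaches to $B_{i-1}$ the triangles of $G$ immediately outside $C_{i-1}$; since every vertex of $C_{i-1}$ has degree $6$ in $G$ (here $i-1 \le k-1$) and a known number of edges already inside $B_{i-1}$, the remaining edges fan outward to new vertices, and the triangulation condition forces these to close up into a single cycle $C_i$. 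That $C_i$ is chordless follows from a short distance argument: a chord $xy$ would separate $u$ from some third boundary vertex $z$, forcing $\dist(u,z) \ge \dist(u,\{x,y\})+1 = i+1$ and contradicting $z \in N^i(u)$. \textbf{This induction is the main obstacle}, since it is where the degree-$5$/degree-$6$ hypothesis does all its work; the subsequent counting is routine.

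With the layered structure in hand, I extract two independent expressions for $e_i$ when $1 \le i \le k-1$. First, the region between $C_i$ and $C_{i+1}$ is a triangulated annulus with no vertex in its interior, bounded by cycles of lengths $n_i$ and $n_{i+1}$; Euler's formula together with a count of edge--face incidences shows that such an annulus has exactly $e_i = n_i + n_{i+1}$ spoke edges (and equally many triangles). Second, Euler's formula applied to the triangulated disk $B_i$ itself --- whose interior vertices all have degree $6$ except $u$, which has degree $5$ --- gives that the boundary degree sum equals $4n_i - 5$, so the number of edges leaving the boundary is $e_i = 6n_i - (4n_i - 5) = 2n_i + 5$. (The surplus $+5$ over the ``flat'' value $2n_i$ is precisely the angular deficit contributed by the single $5$-vertex $u$.)

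Comparing the two expressions gives $n_{i+1} = n_i + 5$ for $1 \le i \le k-1$; since $n_1 = 5$, this yields $n_i = 5i$ for all $1 \le i \le k$. Finally, treating the central vertex separately via $e_0 = d_G(u) = 5$, I sum
\[
|\delta_G^k(u)| = e_0 + \sum_{i=1}^{k-1} e_i = 5 + \sum_{i=1}^{k-1}\bigl(n_i + n_{i+1}\bigr) = 5 + 5\sum_{i=1}^{k-1}(2i+1) = 5 + 5(k^2 - 1) = 5k^2,
\]
as required.
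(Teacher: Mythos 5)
Your proof is correct and follows essentially the same route as the paper: decompose $\delta^k(u)$ into the concentric layers $\delta(N^i[u])$, show each has $5(2i+1)$ edges, and sum. The paper dismisses the per-layer count with ``it is easy to see''; your Euler-formula derivation of $\left|\delta(N^i[u])\right| = 2n_i+5$ and the recursion $n_{i+1}=n_i+5$ simply supplies those details, mirroring the computation the paper carries out in full generality in the proof of Lemma~\ref{lem:perimeter}.
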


\begin{proof}
It is easy to see that $\left|\delta(N^k[u])\right|=5(2k+1)$, so $\left|\delta^k(u)\right|=\sum_{i=0}^{k-1}\left|\delta(N^i[u])\right|=5\sum_{i=0}^{k-1}(2i+1)=5k^2$.
\end{proof}

For more general moats, we can prove the following inequality.
\begin{lemma}
\label{lem:perimeter}
Let $G$ be a plane triangulation with all vertices of degree $5$ and $6$, $T$ the set of $5$-vertices of $G$, and $X \subset V(G)$. If $G[X]$ is a $p$-patch such that $0<p<6$, and no edge of $\delta^{k-1}(X)$ is incident to a vertex of $T$, then
\[
\left|\delta_G^k(X)\right| \geq (6-p)k^2+2k\sqrt{(6-p)A(G[X])}.
\]
If equality holds, then $p=1$.
\end{lemma}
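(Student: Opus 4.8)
The plan is to slice the moat into its $k$ concentric layers $\delta(N^i[X])$, $i=0,\dots,k-1$, and to control how the perimeters of the successive boundary cycles grow. Write $C_i$ for the outer cycle of the patch $N^i[X]$ and set $c_i:=|V(C_i)|$, so that $C_0=C$ and $c_0=|V(C)|$. The hypothesis that no edge of $\delta^{k-1}(X)$ meets $T$ should force every vertex lying on $C_0,C_1,\dots,C_{k-1}$ to have degree $6$, so that the region between two consecutive cycles is a genuine triangulated annulus, consisting only of \emph{up-triangles} based on an edge of $C_i$ and \emph{down-triangles} based on an edge of $C_{i+1}$. A first routine count in such an annulus (there are $c_i$ up-triangles and $c_{i+1}$ down-triangles, and each radial edge lies in exactly one of each) shows that the radial edges — which are precisely the edges of $\delta(N^i[X])$ — number $c_i+c_{i+1}$. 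Since the layers are pairwise disjoint, this gives
\[
\left|\delta_G^k(X)\right| = \sum_{i=0}^{k-1}\left|\delta(N^i[X])\right| = \sum_{i=0}^{k-1}(c_i+c_{i+1}).
\]

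The second step is to prove that the perimeters grow linearly, namely $c_i-c_{i-1}=6-p$ for $1\le i\le k$. I would deduce this from a combinatorial Gauss--Bonnet count on the disk $N^i[X]$: for a triangulated disk with outer cycle $C_i$, Euler's formula yields $\sum_v (6-\deg v) = 6+2c_i$, where the sum runs over the vertices of the disk and degrees are computed inside it. The interior vertices contribute exactly $p$ (one for each enclosed $5$-vertex, zero for the degree-$6$ vertices), whereas the boundary vertices of $C_i$ have total degree $3c_i+c_{i-1}$ inside the disk (the $c_i$ edges of $C_i$ account for $2c_i$, and the $c_{i-1}+c_i$ radial edges to $C_{i-1}$ account for the rest). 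Comparing the two expressions gives $3c_i+c_{i-1} = 6+2c_i-p$, that is, $c_i-c_{i-1}=6-p$, whence $c_i = c_0 + (6-p)i$.

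Substituting this into the first display and using $\sum_{i=0}^{k-1}(2i+1)=k^2$ gives the exact value
\[
\left|\delta_G^k(X)\right| = (6-p)k^2 + 2k\,c_0 .
\]
It then remains only to bound $c_0=|V(C)|$ from below, and this is exactly Theorem~\ref{thm:justus} applied to the inner patch $G[X]$, which gives $c_0\ge\sqrt{(6-p)A(G[X])}$. Since $0<p<6$, this yields the claimed inequality. Moreover, equality in the conclusion forces equality in the bound on $c_0$, hence equality in Theorem~\ref{thm:justus}, which happens only when $p=1$.

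The main obstacle is the structural/regularity step rather than the arithmetic. One must check that each $N^i[X]$ really is a patch with a single well-defined outer cycle $C_i$, and that every vertex introduced in the moat has degree exactly $6$, so that the intermediate annuli contain no defects and the two identities $|\delta(N^i[X])|=c_i+c_{i+1}$ and $c_i-c_{i-1}=6-p$ hold exactly. This is precisely where the assumption on $\delta^{k-1}(X)$ and $T$ is needed, and some care is required at the innermost cycle $C_0$, where the count of edges leaving $C_0$ toward the interior interacts with the $p$ enclosed $5$-vertices. Once this annular picture is secured, the remaining computation is the bookkeeping above.
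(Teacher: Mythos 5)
Your proof is correct and follows essentially the same route as the paper's: a discrete Gauss--Bonnet/Euler count shows that each successive boundary cycle grows by exactly $6-p$, the layer-by-layer edge counts sum to $2k|V(C)|+(6-p)k^2$, and Theorem~\ref{thm:justus} applied to the innermost patch bounds $|V(C)|$ from below, with the equality case ($p=1$) inherited from that theorem. Only note the sign slip in your intermediate identity: the boundary vertices contribute $6c_i-(3c_i+c_{i-1})=3c_i-c_{i-1}$ to $\sum_v(6-\deg v)$, so the comparison should read $p+3c_i-c_{i-1}=6+2c_i$, which indeed yields $c_i-c_{i-1}=6-p$ as you state (your displayed equation $3c_i+c_{i-1}=6+2c_i-p$ would instead give $c_i+c_{i-1}=6-p$).
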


\begin{proof}
Let $C$ be the outer cycle of $G[X]$, and denote by $n$, $m$ and $f$ the number of vertices, edges, and faces (including the outer face) of $G[X]$, respectively. Summing the vertex degrees of $G[X]$ gives $2m=\sum_{v \in V(C)} d_{G[X]}(v)+6(n-|V(C)|)-p$, so
\begin{equation}\label{eq:vertices}
\sum_{v \in V(C)} d_{G[X]}(v)=6|V(C)|+p-6n+2m.
\end{equation}
Summing the face degrees gives $2m=3(f-1)+|V(C)|$, so
\begin{equation}\label{eq:faces}
0=-2|V(C)|+4m-6f+6.
\end{equation}
Adding~\eqref{eq:vertices} and~\eqref{eq:faces},
\begin{equation}\label{eq:boundary}
\sum_{v \in V(C)} d_{G[X]}(v)= 4|V(C)|+p-6(n-m+f-1) =4|V(C)|+p-6,
\end{equation}
where the last equation follows from Euler's formula.

Applying~\eqref{eq:boundary} to the $p$-patch $G[X]$ and the $(12-p)$-patch $G-X$,
\begin{align*}
2|V(C)|+6-p &= \sum_{v \in V(C)}(6-d_{G[X]}(v)) \\
            &= \sum_{v \in N(X)}(6-d_{G-X}(v)) \\
            &= 2|N(X)|-6+p,
\end{align*}
whence $|N(X)|=|V(C)|+6-p$, so by induction,
\begin{equation}\label{eq:N^k(X)}
|N^k(X)|=|V(C)|+(6-p)k.
\end{equation}
By~\eqref{eq:boundary} and~\eqref{eq:N^k(X)}, the number of edges in $\delta(N^k[X])$ is
\begin{align*}
\left|\delta(N^k[X])\right| &= \sum_{v \in N^k(X)}(6-d_{G[X]}(v)) \\
                            &= 2|N^k(X)|+6-p \\
                            &= 2|V(C)|+(6-p)(2k+1),
\end{align*}
\enlargethispage{2\baselineskip}
so the number of edges in $\delta^k(X)$ is
\begin{align*}
\left|\delta^k(X)\right| &= \sum_{i=0}^{k-1}\left|\delta\left(N^i[X]\right)\right|\\
                         &= \sum_{i=0}^{k-1}\left(2|V(C)|+(6-p)(2i+1)\right)\\
                         &= 2k|V(C)|+(6-p)k^2.
\end{align*}
By Theorem~\ref{thm:justus}, $|V(C)| \geq \sqrt{(6-p)A(G[X])}$, with equality only if $p=1$.
\end{proof}

\section{Packing moats in plane triangulations}\label{sec:proof}

When $G$ is a plane triangulation, there exists, by Lemma~\ref{lem:refinement}, an optimal laminar packing $\delta_{G^{\vartriangle}}(\mathcal F)$ of inclusion-wise minimal $T$-cuts in the refinement $G^{\vartriangle}$. We may furthermore assume that the family which gives rise to this packing satisfies $|T\cap X|\leq 5$ for all $X \in \mathcal F$, and minimises $\sum_{X \in \mathcal F} |X|$. We call such a packing a \emph{moat packing}. Let us remark that Kr\'al', Sereni and Stacho~\cite{KrSeSt11+} considered moat packings in bipartite graphs (they used the name \emph{moat solution}). The reason for choosing this name is the following.

For every odd-cardinality subset $U \subset T$, the union of all $T$-cuts in $\delta_{G^{\vartriangle}}(\mathcal F)$ which separate $U$ from $T-U$ is of the form $\delta_{G^{\vartriangle}}^k(X)$, where $U \subseteq X \in \mathcal F$ and $k \in \mathbb N$, i.e., it is a moat of width $k$ surrounding $X$. By the minimality of $\sum_{X \in \mathcal F} |X|$, every $1$-moat in $\delta_{G^{\vartriangle}}(\mathcal F)$ is a disk centred on a vertex $u \in T$, and every vertex of $T$ is the centre of a disk of radius at least $1$. Also by the minimality of $\sum_{X \in \mathcal F} |X|$, if $X \in \mathcal F$ is such that $|X|>1$, then $G[X]$ is $2$-connected. Since every $T$-cut in $\delta_{G^{\vartriangle}}(\mathcal F)$ is inclusion-wise minimal, precisely one face of $G[X]$---the outer face---is not a triangle. Hence, $G[X]$ is a patch, for every $X \in \mathcal F$ such that $|X|>1$.

Therefore, a moat packing of $T$-cuts may be considered as a packing of disks, $3$-moats and $5$-moats. Figure~\ref{fig:tetrahedron} shows an example of such a packing.

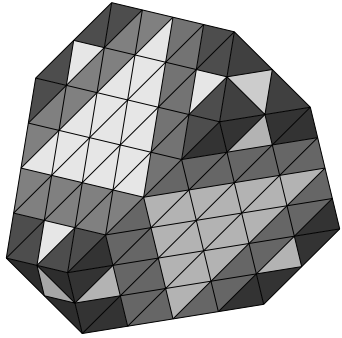
\begin{figure}
\centering
\begin{tikzpicture}[ultra thin,scale=1,line join=bevel,z=-5.5]

\coordinate (A1) at (2,1,1);
\coordinate (A2) at (1,2,1);
\coordinate (A3) at (1,1,2);
\coordinate (B1) at (-2,-1,1);
\coordinate (B2) at (-1,-2,1);
\coordinate (B3) at (-1,-1,2);
\coordinate (C1) at (-2,1,-1);
\coordinate (C2) at (-1,2,-1);
\coordinate (C3) at (-1,1,-2);
\coordinate (D1) at (2,-1,-1);
\coordinate (D2) at (1,-2,-1);
\coordinate (D3) at (1,-1,-2);

\coordinate (A1A2) at (1.5,1.5,1);
\coordinate (A1A3) at (1.5,1,1.5);
\coordinate (A2A3) at (1,1.5,1.5);
\coordinate (B1B2) at (-1.5,-1.5,1);
\coordinate (B1B3) at (-1.5,-1,1.5);
\coordinate (B2B3) at (-1,-1.5,1.5);
\coordinate (C1C2) at (-1.5,1.5,-1);
\coordinate (D1D2) at (1.5,-1.5,-1);
\coordinate (A3B3) at (0,0,2);
\coordinate (AA3B3) at (0.5,0.5,2);
\coordinate (A3BB3) at (-0.5,-0.5,2);
\coordinate (AA1D1) at (2,0.5,0.5);
\coordinate (A1DD1) at (2,-0.5,-0.5);
\coordinate (A1D1) at (2,0,0);
\coordinate (AA1D1) at (2,0.5,0.5);
\coordinate (A1DD1) at (2,-0.5,-0.5);
\coordinate (BB2D2) at (-0.5,-2,0.5);
\coordinate (B2DD2) at (0.5,-2,-0.5);
\coordinate (A2C2) at (0,2,0);
\coordinate (AA2C2) at (0.5,2,0.5);
\coordinate (A2CC2) at (-0.5,2,-0.5);
\coordinate (B1C1) at (-2,0,0);
\coordinate (BB1C1) at (-2,-0.5,0.5);
\coordinate (B1CC1) at (-2,0.5,-0.5);
\coordinate (B2D2) at (0,-2,0);

\draw [draw=none,fill opacity=1,fill=white!80!black] (A1) -- (A2) -- (A3) -- cycle;
\draw [draw=none,fill opacity=1,fill=white!65!black] (B1) -- (B2) -- (B3) -- cycle;
\draw [draw=none,fill opacity=1,fill=white!70!black] (A1) -- (A3) -- (B3) -- (B2) -- (D2) -- (D1) -- cycle;
\draw [draw=none,fill opacity=1,fill=white!90!black] (A2) -- (A3) -- (B3) -- (B1) -- (C1) -- (C2) -- cycle;

\draw [draw=none,fill opacity=1,fill=gray!40!black] (A3) -- (AA3B3) -- (1,0.5,1.5) -- cycle;
\draw [draw=none,fill opacity=1,fill=gray!40!black] (A3) -- (A1A3) -- (1,0.5,1.5) -- cycle;
\draw [draw=none,fill opacity=1,fill=gray!50!black] (A3) -- (A1A3) -- (A2A3) -- cycle;
\draw [draw=none,fill opacity=1,fill=gray!60!black] (A3) -- (0.5,1,1.5) -- (A2A3) -- cycle;
\draw [draw=none,fill opacity=1,fill=gray!60!black] (A3) -- (0.5,1,1.5) -- (AA3B3) -- cycle;

\draw [draw=none,fill opacity=1,fill=gray!40!black] (A1)--(A1A3) -- (1.5,0.5,1) -- cycle;
\draw [draw=none,fill opacity=1,fill=gray!40!black] (A1)--(AA1D1) -- (1.5,0.5,1) -- cycle;
\draw [draw=none,fill opacity=1,fill=gray!50!black] (A1) -- (A1A2) -- (A1A3) -- cycle;

\draw [draw=none,fill opacity=1,fill=gray!60!black] (A2) -- (0.5,1.5,1) -- (A2A3) -- cycle;
\draw [draw=none,fill opacity=1,fill=gray!60!black] (A2) -- (0.5,1.5,1) -- (AA2C2) -- cycle;
\draw [draw=none,fill opacity=1,fill=gray!50!black] (A2) -- (A2A3) -- (A1A2) -- cycle;

\draw [draw=none,fill opacity=1,fill=gray!60!black] (B3) -- (-1,-0.5,1.5) -- (B1B3) -- cycle;
\draw [draw=none,fill opacity=1,fill=gray!60!black] (B3) -- (-1,-0.5,1.5) -- (A3BB3) -- cycle;
\draw [draw=none,fill opacity=1,fill=gray!35!black] (B3) -- (B1B3) -- (B2B3) -- cycle;
\draw [draw=none,fill opacity=1,fill=gray!40!black] (B3) -- (-0.5,-1,1.5) -- (B2B3) -- cycle;
\draw [draw=none,fill opacity=1,fill=gray!40!black] (B3) -- (-0.5,-1,1.5) -- (A3BB3) -- cycle;

\draw [draw=none,fill opacity=1,fill=gray!40!black] (B2) -- (-0.5,-1.5,1) -- (B2B3) -- cycle;
\draw [draw=none,fill opacity=1,fill=gray!40!black] (B2) -- (-0.5,-1.5,1) -- (BB2D2) -- cycle;
\draw [draw=none,fill opacity=1,fill=gray!35!black] (B2) -- (B1B2) -- (B2B3) -- cycle;

\draw [draw=none,fill opacity=1,fill=gray!60!black] (B1) -- (-1.5,-0.5,1) -- (B1B3) -- cycle;
\draw [draw=none,fill opacity=1,fill=gray!60!black] (B1) -- (-1.5,-0.5,1) -- (BB1C1) -- cycle;
\draw [draw=none,fill opacity=1,fill=gray!35!black] (B1) -- (B1B2) -- (B1B3) -- cycle;

\draw [draw=none,fill opacity=1,fill=gray!60!black] (C2) -- (-1,1.5,-0.5) -- (A2CC2) -- cycle;
\draw [draw=none,fill opacity=1,fill=gray!60!black] (C2) -- (-1,1.5,-0.5) -- (C1C2) -- cycle;

\draw [draw=none,fill opacity=1,fill=gray!60!black] (C1) -- (-1.5,1,-0.5) -- (B1CC1) -- cycle;
\draw [draw=none,fill opacity=1,fill=gray!60!black] (C1) -- (-1.5,1,-0.5) -- (C1C2) -- cycle;

\draw [draw=none,fill opacity=1,fill=gray!40!black] (D2) -- (1,-1.5,-0.5) -- (B2DD2) -- cycle;
\draw [draw=none,fill opacity=1,fill=gray!40!black] (D2) -- (1,-1.5,-0.5) -- (D1D2) -- cycle;

\draw [draw=none,fill opacity=1,fill=gray!40!black] (D1) -- (1.5,-1,-0.5) -- (A1DD1) -- cycle;
\draw [draw=none,fill opacity=1,fill=gray!40!black] (D1) -- (1.5,-1,-0.5) -- (D1D2) -- cycle;

\draw [draw=none,fill opacity=1,fill=gray!90!black] (AA2C2)-- (A2C2) -- (A3B3) -- (AA3B3) -- cycle;
\draw [draw=none,fill opacity=1,fill=gray!80!black] (AA3B3)-- (A3B3) -- (A1D1) -- (AA1D1) -- cycle;

\draw [draw=none,fill opacity=1,fill=gray!100!black] (A3BB3)-- (A3B3) -- (B1C1) -- (BB1C1) -- cycle;
\draw [draw=none,fill opacity=1,fill=gray!80!black] (BB2D2)-- (B2D2) -- (A3B3) -- (A3BB3) -- cycle;

\draw [draw=none,fill opacity=1,fill=gray!100!black] (A2CC2)-- (A2C2) -- (B1C1) -- (B1CC1) -- cycle;

\draw [draw=none,fill opacity=1,fill=gray!80!black] (A1DD1)-- (A1D1) -- (B2D2) -- (B2DD2) -- cycle;

\draw (A1) -- (A2) -- (A3) -- cycle;
\draw (B1) -- (B2) -- (B3) -- cycle;
\draw (C1) -- (C2);
\draw (D1) -- (D2);
\draw (A1) -- (D1);
\draw (A2) -- (C2);
\draw (A3) -- (B3);
\draw (B1) -- (C1);
\draw (B2) -- (D2);

\draw (A2C2)--(A3B3)--(A1D1);
\draw (A2A3)--(B1B3);
\draw (A2A3)--(C1C2);
\draw (B1B3)--(C1C2);
\draw (AA2C2)--(AA3B3);
\draw (A2CC2)--(A3BB3);
\draw (AA3B3)--(AA1D1);
\draw (A3BB3)--(A1DD1);
\draw (A3BB3)--(BB2D2);
\draw (AA3B3)--(B2DD2);
\draw (AA1D1)--(BB2D2);
\draw (A1DD1)--(B2DD2);
\draw (BB1C1)--(AA2C2);
\draw (B1CC1)--(A2CC2);
\draw (BB1C1)--(A3BB3);
\draw (B1CC1)--(AA3B3);
\draw (A1A2)--(A1A3)--(A2A3)--cycle;
\draw (B1B2)--(B1B3)--(B2B3)--cycle;
\draw (A1A3)--(B2B3);
\draw (A1A3)--(D1D2);
\draw (B2B3)--(D1D2);
\draw (B1C1)--(A3B3);
\draw (A3B3)--(B2D2);
\draw (A2C2)--(B1C1);
\draw (A1D1)--(B2D2);
\draw (A1)--(B2);
\draw (A3)--(C1);
\draw (A3)--(D2);
\draw (A2)--(B1);
\draw (B3)--(C2);
\draw (B3)--(D1);
\end{tikzpicture}
\caption{A triangulation of the truncated tetrahedron, with a packing of twelve disks and four $3$-moats. The faces spanned by disks are shaded in dark grey, and those spanned by $3$-moats are shaded in light grey. The incidence vectors of this particular packing are $r=1$, $s=1$ and $t=0$.}
\label{fig:tetrahedron}
\end{figure}

We are at last ready to prove Theorem~\ref{thm:main}. To be exact, we first prove the following dual version.

\begin{theorem}\label{thm:T-join}
Let $G$ be a plane triangulation with $f$ faces and all vertices of degree $5$ and $6$. If $T$ is the set of $5$-vertices of $G$, then $\tau(G,T) \leq \sqrt{\frac{12}5f}$, with equality if and only if $f=60k^2$, for some $k \in \mathbb N$, and $\Aut(G) \cong I_h$.
\end{theorem}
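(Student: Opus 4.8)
The plan is to prove the dual bound $\tau(G,T)\le\sqrt{\tfrac{12}5 f}$ by converting the minimum $T$-join into a packing of moats and optimising that packing against the total number of faces. Recall that $|T|=12$ by Euler's formula. By Lemma~\ref{lem:refinement}, $\tau(G,T)=\tfrac12\nu(G^{\vartriangle},T)$, and there is an optimal laminar packing of inclusion-wise minimal $T$-cuts; as explained just before the statement of Theorem~\ref{thm:T-join}, such a \emph{moat packing} decomposes into disks, $3$-moats and $5$-moats, with exactly one disk centred on each of the twelve vertices of $T$. Since passing from $G$ to its refinement doubles the relevant radii, a moat of width $k$ in $G$ supplies $2k$ of the cuts counted by $\nu(G^{\vartriangle},T)$; hence
\[
\tau(G,T)=\tfrac12\nu(G^{\vartriangle},T)=\sum_{M}w(M),
\]
the sum of the widths $w(M)$ of all the moats $M$ in the packing.

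Next I would turn the isoperimetric estimates into an area budget. A disk of radius $k$ spans $5k^2$ faces by Lemma~\ref{lem:disk}, while a $p$-moat (with $p\in\{3,5\}$) of width $k$ surrounding a $p$-patch of area $A_0$ spans at least $(6-p)k^2+2k\sqrt{(6-p)A_0}$ faces by Lemma~\ref{lem:perimeter}. The faces spanned by distinct moats are disjoint, and any face enclosed by a $3$- or $5$-moat but lying outside its inner disks is simply wasted; consequently the total number of faces consumed is at most $f$. This yields a constrained optimisation: maximise $\sum_M w(M)$ subject to the spanned-plus-wasted area being at most $f$.

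I would then solve this optimisation. In the pure-disk case the constraint reads $\sum_{i=1}^{12}5k_i^2\le f$, and Cauchy--Schwarz gives
\[
\tau(G,T)=\sum_{i=1}^{12}k_i\le\sqrt{12\sum_{i=1}^{12}k_i^2}=\sqrt{\tfrac{12}5\sum_{i=1}^{12}5k_i^2}\le\sqrt{\tfrac{12}5 f}.
\]
The main obstacle is to show that $3$-moats and $5$-moats cannot beat this: although such a moat of width $k$ spans fewer than $5k^2$ faces, the smaller coefficient $6-p<5$ in Lemma~\ref{lem:perimeter}, together with the filler area it is forced to enclose around its patch, makes it strictly less cut-efficient than splitting its defects into separate disks. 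Making this trade-off precise---presumably via the incidence vectors $r,s,t$ of Figure~\ref{fig:tetrahedron} and a term-by-term comparison of each moat against an equivalent family of disks---is the technical heart of the argument, and is where I expect the real work to lie.

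Finally, for the equality case I would trace back the three inequalities. Tightness of Cauchy--Schwarz forces all twelve disk radii to equal a common value $k$; the strictness of Lemma~\ref{lem:perimeter} for $p>1$ forbids any $3$- or $5$-moat in an extremal packing; and tightness of the area budget forces the twelve disks to tile $G$, so that $f=12\cdot 5k^2=60k^2$. It then remains to show that twelve equal, perfectly regular disks---the equality case of Theorem~\ref{thm:justus} with $p=1$---tiling a $5$/$6$-triangulation can only be arranged icosahedrally, whence $\Aut(G)\cong I_h$; the converse is the Do\v sli\'c--Vuki\v cevi\'c construction on $60k^2$ faces. This rigidity step, deducing the full symmetry group from a locally forced tiling, is the subtlest part of the characterisation.
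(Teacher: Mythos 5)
Your outline follows the paper's strategy closely---refinement, moat packing, the isoperimetric bounds of Lemmas~\ref{lem:disk} and~\ref{lem:perimeter}, and a final Cauchy--Schwarz---and your pure-disk computation is exactly the paper's. But the two places where you write ``this is where the real work lies'' are precisely where the proof happens, and what you sketch there would not go through as stated. For the $3$- and $5$-moats, a term-by-term exchange replacing each moat by ``an equivalent family of disks'' is problematic: the constraint (total spanned area at most $4f$ in $G^{\vartriangle}$) is global, and the disks sitting inside a $3$- or $5$-patch cannot be enlarged independently of one another or of the surrounding moat, so a local swap neither obviously preserves feasibility nor obviously increases the objective. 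The paper uses no exchange argument. Instead it feeds the disk areas back into Lemma~\ref{lem:perimeter}: the patch inside a $3$-moat has area at least $5\sum r_u^2\ge\tfrac53\bigl(\sum r_u\bigr)^2$ by Cauchy--Schwarz over its three defects, so a surrounding moat of width $s_u$ spans at least $\sum s_u^2+2\sqrt5\sum r_us_u$ faces, and the analogous bound for a $5$-moat also absorbs the $3$-moat it encloses. Summing, the cross terms are exactly those needed to complete a square, $4f\ge m_1+m_3+m_5\ge\bigl\|\sqrt5\,r+s+\tfrac1{\sqrt5}t\bigr\|^2$, after which one Cauchy--Schwarz in $\mathbb R^{12}$ together with $\tfrac13\le\tfrac1{\sqrt5}$ bounds $\tau(G,T)=\tfrac12\bigl\langle r+\tfrac13s+\tfrac15t,1\bigr\rangle$. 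You need this (or something equally quantitative); the heuristic that moats are less cut-efficient than disks is not a proof. (Your identity $\tau(G,T)=\sum_Mw(M)$ with widths measured in $G$ also tacitly assumes every moat of the optimal packing in $G^{\vartriangle}$ has even width, which is not justified and not needed if you work in $G^{\vartriangle}$ throughout.)

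The equality case likewise needs more than you give, but less than you fear: it is not a separate rigidity theorem. In the paper it falls out of the same computation---equality forces $s=t=0$ because equality in Lemma~\ref{lem:perimeter} requires $p=1$, tight Cauchy--Schwarz forces all $r_u$ equal, and $4f=60r_u^2$ with $f$ even forces $r_u=2k$, so the twelve disks of radius $k$ in $G$ partition all $f=60k^2$ faces and $G$ is the dodecahedron with a $1$-patch $G[N^k[u]]$ inserted in each face, whence $\Aut(G)\cong I_h$. The converse uses that in such a graph the defects are pairwise at distance at least $2k$, so the twelve disjoint disks certify $\tau(G,T)\ge\nu(G,T)\ge12k=\sqrt{\tfrac{12}5f}$. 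Your proposal asserts both directions rather than proving them.
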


\begin{proof}
Let $G^{\vartriangle}$ be the refinement of $G$; so $G^{\vartriangle}$ is a plane triangulation with $4f$ faces and all vertices of degree $5$ and $6$. By Lemma~\ref{lem:refinement}, there exists a moat packing $\delta_{G^{\vartriangle}}(\mathcal F)$. Let $m_1$, $m_3$ and $m_5$ be the number of edges in all disks, $3$-moats, and $5$-moats of $\delta_{G^{\vartriangle}}(\mathcal F)$, respectively. Define the incidence vectors $r, s, t \in \mathbb R^{12}$ as follows: for every $u \in T$, let $r_u$, $s_u$ and $t_u$ be the radius of the disk centred on $u$, the width of the $3$-moat surrounding $u$, and the width of the $5$-moat surrounding $u$, respectively. By the optimality of $\delta_{G^{\vartriangle}}(\mathcal F)$,
\begin{equation}\label{eq:moat_packing}
\tau(G,T) = \tfrac12\nu(G^{\vartriangle},T) = \tfrac12\left\langle r + \tfrac 13 s + \tfrac 15 t, 1\right\rangle,
\end{equation}
where $\langle \cdot,\cdot \rangle$ denotes the inner product.

So to prove the inequality in Theorem~\ref{thm:T-join}, it suffices to find an upper bound on $\left\langle r + \frac 13 s + \frac 15 t, 1\right\rangle$ in terms of $f$. To do so, we compute lower bounds on $m_1$, $m_3$ and $m_5$ in terms of the vectors $r$, $s$ and $t$, and then use the fact that the sum $m_1+m_3+m_5$ cannot exceed $4f$, the number of faces of $G^{\vartriangle}$.

First suppose that $\delta_{G^{\vartriangle}}^{r_u}(u)$ is a disk of $\delta_{G^{\vartriangle}}(\mathcal F)$, for some $u \in T$. Recall that by Lemma~\ref{lem:disk},
\begin{equation}\label{eq:onedisk}
\left|\delta_{G^{\vartriangle}}^{r_u}(u)\right| = 5r_u^2,
\end{equation}
so summing over all disks,
\begin{equation}\label{eq:disk}
m_1 = 5\sum_{u\in T}r_u^2 = 5\|r\|^2,
\end{equation}
where $\|\cdot\|$ denotes the norm.

Now, suppose $\delta_{G^{\vartriangle}}^{s_u}(X)$ is a non-empty $3$-moat of $\delta_{G^{\vartriangle}}(\mathcal F)$, where $u\in T \cap X$ and $|T \cap X| = 3$. The graph $G^{\vartriangle}[X]$ contains $|\delta_{G^{\vartriangle}}^{r_u}(u)|$ triangles spanned by $\delta_{G^{\vartriangle}}^{r_u}(u)$, for every $u \in T \cap X$. All the triangles are pairwise disjoint, so by~\eqref{eq:onedisk} and the Cauchy-Schwarz inequality,
\[
A(G^{\vartriangle}[X])\geq \sum_{u\in T \cap X}\left|\delta_{G^{\vartriangle}}^{r_u}(u)\right| = 5\sum_{u\in T \cap X}r_u^2 \geq \frac53\left(\sum_{u\in T \cap X}r_u\right)^2.
\]
Hence, by Lemma~\ref{lem:perimeter},
\begin{align}
\left|\delta_{G^{\vartriangle}}^{s_u}(X)\right| &\geq 3s_u^2+2s_u\sqrt{3A(G^{\vartriangle}[X])} \notag\\
                                                &\geq 3s_u^2+2\sqrt{5}s_u\sum_{u\in T \cap X}r_u \notag\\
                                                &= \sum_{u\in T \cap X}s_u^2+2\sqrt{5}\sum_{u \in T \cap X}r_us_u\label{eq:one3-moat}.
\end{align}
Summing over all $3$-moats,
\begin{equation}\label{eq:3-moat}
m_3 \geq \|s\|^2+2\sqrt 5\langle r,s \rangle.
\end{equation}

Finally, suppose $\delta_{G^{\vartriangle}}^{t_u}(Y)$ is a non-empty $5$-moat of $\delta_{G^{\vartriangle}}(\mathcal F)$, where $u \in T \cap Y$ and $|T \cap Y| = 5$. By the laminarity of $\delta_{G^{\vartriangle}}(\mathcal F)$, $G^{\vartriangle}[Y]$ contains at most one $3$-moat $\delta_{G^{\vartriangle}}^{s_u}(X)$ of $\delta_{G^{\vartriangle}}(\mathcal F)$, where $X \subset Y$ and $|T \cap X| = 3$. The graph $G^{\vartriangle}[Y]$ contains $|\delta_{G^{\vartriangle}}^{r_u}(u)|$ triangles spanned by $\delta_{G^{\vartriangle}}^{r_u}(u)$, for every $u \in T \cap Y$, as well as at least $\left|\delta_{G^{\vartriangle}}^{s_u}(X)\right|$ triangles spanned by $\delta_{G^{\vartriangle}}^{s_u}(X)$. All the triangles are pairwise disjoint, so by~\eqref{eq:onedisk}, \eqref{eq:one3-moat}, and the Cauchy-Schwarz inequality,
\begin{align*}
A(G^{\vartriangle}[Y]) &\geq \sum_{u\in T \cap Y}\left|\delta_{G^{\vartriangle}}^{r_u}(u)\right|+\left|\delta_{G^{\vartriangle}}^{s_u}(X)\right|\\
                       &\geq 5\sum_{u\in T \cap Y}r_u^2+2\sqrt{5}\sum_{u \in T \cap Y}r_us_u+\sum_{u \in T \cap Y}s_u^2\\
                       &=    5\sum_{u\in T \cap Y}\left(r_u^2+\tfrac1{\sqrt{5}}s_u^2\right)^2\\
                       &\geq \left(\sum_{u \in T \cap Y}r_u+\tfrac{1}{\sqrt 5}\sum_{u \in T \cap Y}s_u \right)^2.
\end{align*}
Hence, by Lemma~\ref{lem:perimeter},
\begin{align*}
\left|\delta_{G^{\vartriangle}}^{t_u}(Y)\right| &\geq t_u^2+2t_u\sqrt{A(G^{\vartriangle}[Y])} \\
                                               &\geq t_u^2+2t_u\sum_{u \in T \cap Y}r_u+\tfrac{2}{\sqrt 5}t_u\sum_{u \in T \cap Y}s_u \\
                                               &=    \tfrac15\sum_{u \in T \cap Y}t_u^2+2\sum_{u \in T \cap Y}r_ut_u+\tfrac{2}{\sqrt 5}\sum_{u \in T \cap Y}s_ut_u.
\end{align*}
Summing over all $5$-moats,
\begin{equation}\label{eq:5-moat}
m_5 \geq \tfrac15\|t\|^2+2\langle r,t \rangle+\tfrac{2}{\sqrt 5}\langle s,t \rangle.
\end{equation}

The graph $G^{\vartriangle}$ has $4f$ triangles, and the disks, $3$-moats and $5$-moats span $m_1$, $m_3$ and $m_5$ triangles of $G^{\vartriangle}$, respectively. These triangles are mutually disjoint, so by~\eqref{eq:disk}, \eqref{eq:3-moat} and~\eqref{eq:5-moat},
\begin{align*}
4f &\geq m_1+m_3+m_5\\
   &\geq 5\|r\|^2 + \|s\|^2 + 2\sqrt 5\langle r,s \rangle + \tfrac15\|t\|^2 + 2\langle r,t \rangle + \tfrac{2}{\sqrt 5}\langle s,t \rangle\\
   &= \left\|\sqrt 5r + s + \tfrac 1{\sqrt 5}t\right\|^2.
\end{align*}
Hence, by the Cauchy-Schwarz inequality and~\eqref{eq:moat_packing},
\begin{align}
\sqrt{\frac{12f}5} & \geq \sqrt{3}\left\|r + \tfrac 1{\sqrt 5}s + \tfrac 15t\right\| \notag\\
                   & \geq \tfrac12\left\langle r + \tfrac 1{\sqrt 5} s + \tfrac 15 t,1\right\rangle \label{eq:cauchy-schwarz}\\
                   & \geq \tau(G,T). \notag
\end{align}

To prove the last part of Theorem~\ref{thm:T-join}, suppose that $\tau(G,T)=\sqrt{\frac{12}5f}$. Equality must hold in~\eqref{eq:3-moat} and~\eqref{eq:5-moat}, so by Lemma~\ref{lem:perimeter}, $s = t = 0$. Furthermore, equality must hold in~\eqref{eq:cauchy-schwarz}, so $r_u=r_v$ for every $u, v \in T$. Therefore $4f=5\cdot12 r_u^2$, so $f=15r_u^2$. Since $f$ is even, it follows that $r_u = 2k$, and therefore $f=60k^2$, for some $k \in \mathbb N$. To see that $\Aut(G) \cong I_h$, note that the graph $G$ may be constructed from the dodecahedron by inserting into each face a $1$-patch of the form $G[N^k[u]]$.

Conversely, if $G$ is a plane triangulation with $f=60k^2$ faces, all vertices of degree $5$ and $6$, and $\Aut(G) \cong I_h$, then $G$ may be constructed from the dodecahedron by inserting into each face a $1$-patch of the form $G[N^k[u]]$. Hence $\dist(u,v) \geq 2k$, for every pair of distinct vertices in $T$, so $\tau(G,T) \geq 12k=\sqrt{\frac{12}5f}$.
\end{proof}

By applying Theorem~\ref{thm:T-join} to the dual graph, we obtain a proof of Theorem~\ref{thm:main}.

\begin{proof}[Proof of Theorem~\ref{thm:main}]
Let $G$ be a fullerene graph on $n$ vertices. The dual graph $G^*$ is a plane triangulation with $n$ faces and all vertices of degree $5$ and $6$. Let $T$ be the set of vertices of degree $5$, $J^*$ a minimum $T$-join of $G^*$, and $J$ the set of edges of $G$ which correspond to $J^*$. Since $G^*-J^*$ has no odd-degree vertices, $G-J=(G^*-J^*)^*$ is bipartite, and by Theorem~\ref{thm:T-join}, $|J|=|J^*| \leq \sqrt{\frac{12}5n}$, with equality if and only if $n=60k^2$, for some $k \in \mathbb N$ and $\Aut(G) \cong I_h$.
\end{proof}

\section{Independent sets in fullerene graphs}\label{sec:independence}

Recall that a set $X \subseteq V(G)$ is \emph{independent} if the graph $G[X]$ has no edges; the maximum size of an independent set in $G$ is the \emph{independence number $\alpha(G)$}. By the Four Colour Theorem, every planar graph on $n$ vertices has an independent set with at least $\frac 14n$ vertices, and by Brooks' Theorem, every triangle-free, cubic graph on $n$ vertices has an independent set with at least $\frac 13n$ vertices. For triangle-free, cubic, planar graphs, the bound can be improved a little further.

\begin{theorem}[Heckman and Thomas~\cite{HeTh06}]\label{thm:heckman-thomas}
If $G$ is a triangle-free cubic planar graph on $n$ vertices, then $\alpha(G) \geq \frac38n$.
\end{theorem}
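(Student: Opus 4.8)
The plan is to prove the bound by induction, but to pass first to a slightly stronger \emph{weighted} statement so that the local reductions close up. Working directly with cubic graphs is awkward, since any deletion or contraction lowers some degrees and leaves the cubic world; so I would instead prove, for every triangle-free planar graph $H$ with maximum degree at most $3$, an inequality of the shape
\[
8\alpha(H) \ge 3|V(H)| + \sum_{v \in V(H)} w\big(d_H(v)\big) - \varepsilon(H),
\]
where $w$ is a weight that vanishes on $3$-vertices and rewards vertices of smaller degree, and $\varepsilon(H)$ is a correction for a finite list of exceptional small graphs. The values $w(0), w(1), w(2)$ cannot be chosen naively: for example $C_5$ is triangle-free, $2$-regular and has $\alpha=2$, which forces $w(2) \le 1/5$, whereas a single edge would like a larger reward. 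Calibrating $w$, and isolating exactly the small graphs on which the clean inequality fails, is the first delicate point. For a cubic $H$ the weighted inequality collapses to $\alpha(H) \ge \tfrac38|V(H)|$, which is the theorem.

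Next I would take a minimal counterexample $G$ to the weighted statement and reduce its connectivity. A standard argument shows $G$ must be connected and $2$-connected, and in fact cyclically highly connected: a small edge- or vertex-cut splits $G$ into strictly smaller pieces to which induction applies, and one reassembles an independent set of $G$ by gluing the pieces' independent sets across the cut. The bookkeeping encoded in $w$ (and the exception list $\varepsilon$) should be designed precisely so that no more than the allotted fraction is lost at such an interface. After these reductions every face is bounded by a cycle, and triangle-freeness gives $\ell(f)\ge 4$ for every face $f$.

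The engine of the inductive step is a discharging argument powered by Euler's formula. For a $2$-connected cubic plane graph one has $\sum_f (6-\ell(f)) = 12$, so, since each $\ell(f) \ge 4$, the graph is forced to contain an abundance of $4$- and $5$-faces. The core of the proof is then to show that a minimal counterexample can contain \emph{none} of a prescribed list of reducible configurations built around these short faces: given such a configuration, I would delete a bounded vertex set, apply induction to the smaller graph, and extend the returned independent set back over the deleted part while verifying that the weighted inequality is preserved. Proving that at least one reducible configuration is unavoidable is exactly the task of the discharging rules, which are fed by the surplus of short faces guaranteed above.

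The hard part will be the pentagonal faces. Because $6$ is the neutral face length for cubic plane graphs, the extremal graphs are essentially assembled from pentagons and hexagons, and the constant $\tfrac38$ is tight; consequently every reduction that touches a $5$-face must be analysed to exact tightness, with no slack to absorb rounding. I expect essentially all the difficulty to concentrate in the local case analysis of how $5$-faces meet one another, together with the verification that each such reduction maintains the weighted inequality. A secondary obstacle, intertwined with the first, is making the connectivity/gluing step and the exceptional-graph correction $\varepsilon$ mutually consistent with a weighting that is already pinned down tightly by the extremal configurations.
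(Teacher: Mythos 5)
There is nothing in the paper to compare your attempt against: Theorem~\ref{thm:heckman-thomas} is imported verbatim from Heckman and Thomas~\cite{HeTh06} and used as a black box (only in the proof of the Graffiti conjecture), so the paper contains no proof of it. Judged on its own terms, your proposal is an architecture rather than a proof. It does correctly anticipate the shape of the actual Heckman--Thomas argument---strengthen to a weighted inequality for triangle-free planar graphs of maximum degree at most $3$, with a correction term for a finite list of exceptional graphs; take a minimal counterexample; reduce its connectivity; and eliminate it via reducible configurations located by discharging. But every object that carries the mathematical content is left unspecified: the weight function $w$, the exceptional family and $\varepsilon$, the list of reducible configurations, the discharging rules, and the verification that each reduction preserves the weighted inequality. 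You concede this yourself (``the hard part will be the pentagonal faces,'' ``I expect essentially all the difficulty to concentrate in the local case analysis''). Because the constant $\tfrac38$ is attained, none of these choices can be waved through: the weights are pinned to exact values by the extremal configurations, and every reduction touching a $5$-face must close with equality, with no slack. As written, the proposal establishes nothing beyond a plan whose execution is the entire content of~\cite{HeTh06}.

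One concrete illustration of where the sketch is not yet watertight: you invoke $\sum_f (6-\ell(f))=12$ to guarantee an abundance of short faces, but that identity holds for $2$-connected \emph{cubic} plane graphs, whereas your minimal counterexample lives in the subcubic world. Either you must first show that vertices of degree at most $2$ are themselves reducible under your weighting (which is how one would force the minimal counterexample to be cubic), or you must run the discharging with the full Euler identity $\sum_v (2d(v)-6)+\sum_f(\ell(f)-6)=-12$, in which low-degree vertices also contribute charge and the claimed surplus of $4$- and $5$-faces no longer follows for free. This is exactly the kind of interaction between the weighting, the connectivity reductions, and the unavoidability argument that your outline defers and that constitutes the actual difficulty of the theorem.
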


Daugherty~\cite[Conjecture~5.5.2]{Dau09} conjectured that every fullerene graph on $n$ vertices has an independent set with at least $\frac12n-\sqrt{\frac35n}$ vertices. He also conjectured~\cite[Conjecture~5.5.1]{Dau09} that every fullerene graph attaining this bound has the icosahedral automorphism group and $60k^2$ vertices, for some $k \in \mathbb N$. Andova et al.~\cite{ADKLS12+} recently proved that every fullerene graph on $n$ vertices has an independent set with at least $\frac12n-78.58\sqrt{n}$ vertices. Theorem~\ref{thm:main} immediately implies both conjectures of Daugherty.

\begin{corollary}
If $G$ is a fullerene graph on $n$ vertices, then $\alpha(G) \geq \frac 12n-\sqrt{\frac35n}$, with equality if and only if $n=60k^2$, for some $k \in \mathbb N$, and $\Aut(G) \cong I_h$.
\label{cor:daugherty}
\end{corollary}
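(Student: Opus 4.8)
The plan is to deduce the corollary from Theorem~\ref{thm:main} by relating $\alpha(G)$ to $\tau_{\odd}(G)$. First I would establish the general lower bound $\alpha(G) \ge \tfrac12(n - \tau_{\odd}(G))$. Let $J$ be a minimum odd cycle transversal, so that $G - J$ is bipartite with parts $A_1, A_2$. Minimality of $J$ means that $(A_1,A_2)$ is a maximum cut, so every vertex has at least as many neighbours across the cut as on its own side; since $G$ is cubic, each vertex therefore has \emph{at most one} neighbour in its own part. Hence $G[A_1]$ and $G[A_2]$ are matchings, and $J = E(G[A_1]) \cup E(G[A_2])$ is a matching with $|J| = \tau_{\odd}(G)$. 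Writing $j_i = |E(G[A_i])|$, a maximum independent set of the matching $G[A_i]$ has $|A_i| - j_i$ vertices and is independent in $G$, since $A_i$ meets no other edges among its own vertices. Taking the larger part,
\[
\alpha(G) \ge \max_i (|A_i| - j_i) \ge \tfrac12\big((|A_1| - j_1) + (|A_2| - j_2)\big) = \tfrac12\big(n - \tau_{\odd}(G)\big).
\]
Combining this with the bound $\tau_{\odd}(G) \le \sqrt{12n/5}$ of Theorem~\ref{thm:main} and simplifying $\tfrac12\sqrt{12n/5} = \sqrt{3n/5}$ gives $\alpha(G) \ge \tfrac12 n - \sqrt{3n/5}$.

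For the characterisation of equality, the forward direction is immediate: if $\alpha(G) = \tfrac12 n - \sqrt{3n/5}$, then the displayed chain forces $\tfrac12 \tau_{\odd}(G) \ge \sqrt{3n/5}$, i.e.\ $\tau_{\odd}(G) \ge \sqrt{12n/5}$; together with the upper bound of Theorem~\ref{thm:main} this yields $\tau_{\odd}(G) = \sqrt{12n/5}$, whence $n = 60k^2$ and $\Aut(G) \cong I_h$ by the equality clause of Theorem~\ref{thm:main}.

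The converse direction is where the real work lies: I must show that every icosahedral fullerene $G$ on $n = 60k^2$ vertices satisfies $\alpha(G) \le \tfrac12 n - 6k$ (here $\sqrt{3n/5} = 6k$), so that the lower bound above is attained. The generic estimate $\alpha(G) \le \tfrac12 n - \tfrac13 \tau_{\odd}(G) = \tfrac12 n - 4k$, obtained from the cut $(S, V\setminus S)$ of a maximum independent set $S$, is \emph{not} tight enough, so a structural argument is needed. For $k = 1$ (the graph $C_{60}$) there is a clean proof: the twelve pentagons are pairwise disjoint and partition the $60$ vertices, and an independent set meets each $5$-cycle in at most $2$ vertices, giving $\alpha \le 24 = \tfrac12 n - 6$. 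The main obstacle is to extend this to general $k$, where the pentagons cover only $60$ of the $60k^2$ vertices and the bulk of the graph is hexagonal and locally bipartite.

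To overcome it I would exploit the explicit description of the extremal graphs from the equality analysis of Theorem~\ref{thm:T-join}: the dual triangulation is built from the dodecahedron by inserting a $1$-patch of the form $N^k[u]$ into each face, so the twelve pentagons of $G$ occupy icosahedral positions and lie pairwise far apart. Since $\alpha(G) + \tau_{\mathrm{vc}}(G) = n$ and $G$ (being cubic and bridgeless) has a perfect matching, the deficiency $\tfrac12 n - \alpha(G)$ equals $\tau_{\mathrm{vc}}(G) - \tfrac12 n$, the excess of the minimum vertex cover over a perfect matching. It thus suffices to show, via the Gallai--Edmonds decomposition, that the odd structure forced by the twelve pentagonal defects contributes a deficiency of at least $6k$ --- for instance by locating $6k$ vertex-disjoint odd subgraphs near the defects. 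Matching this with the explicit lower-bound construction, and confirming that equality is confined to these graphs, is the crux of the argument.
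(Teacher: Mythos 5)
Your lower bound $\alpha(G) \ge \tfrac12\bigl(n-\tau_{\odd}(G)\bigr)$ and the forward implication of the equality statement are correct, and they follow essentially the same route as the paper: the paper picks one endpoint of each edge of a minimum odd cycle transversal to get an odd cycle vertex transversal $U$ with $|U|\le\tau_{\odd}(G)$ and uses $\alpha(G-U)\ge\tfrac12(n-|U|)$, while you phrase the same bound via a maximum cut; the two are equivalent.

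The genuine gap is the converse implication, which you correctly identify as the crux but do not prove: you must show $\alpha(G)\le\tfrac12 n-6k$ for every fullerene graph with $n=60k^2$ and $\Aut(G)\cong I_h$. Your proposed route via the Gallai--Edmonds decomposition is both unexecuted and quantitatively miscalibrated: a single odd cycle only forces a vertex cover to exceed half of its vertices by $\tfrac12$, so ``$6k$ vertex-disjoint odd subgraphs near the defects'' would at best yield a deficiency of $3k$; you need $12k$ disjoint odd cycles, and they must account for \emph{all} vertices, since vertices lying outside the odd structure contribute nothing beyond the trivial rate of one cover vertex per matching edge. The missing step can be completed from the structure exhibited in the equality analysis of Theorem~\ref{thm:T-join}: the dual triangulation $G^*$ is the dodecahedron with a $1$-patch $G^*[N^k[u]]$ inserted into each face, and the twelve disks $\delta_{G^*}^k(u)$ span all $12\cdot 5k^2=n$ faces of $G^*$. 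For each $5$-vertex $u$ and each $0\le j\le k-1$, the bond $\delta_{G^*}(N^j[u])$ is dual to a cycle of $G$ of odd length $5(2j+1)$, and these $12k$ cycles are pairwise vertex-disjoint and together cover all of $V(G)$. Since an independent set meets an odd cycle $C$ in at most $\tfrac12(|C|-1)$ vertices, $\alpha(G)\le\sum_i\tfrac12(|C_i|-1)=\tfrac12(n-12k)$, as required. (The paper itself disposes of the equality case differently and very briefly, by observing that a minimum $T$-join of $G^*$ uses at most one edge per triangular face, so that the corresponding minimum odd cycle transversal of $G$ is a matching, and then invoking Theorem~\ref{thm:main}; either way, some argument beyond what you have written is needed to certify that the icosahedral graphs actually attain the bound.)
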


\begin{proof}
Every graph $G$ contains an odd cycle vertex transversal $U$ such that $|U| \leq \tau_{\odd}(G)$, so $\alpha(G) \geq \alpha(G-U) \geq \frac12n-\frac12\tau_{\odd}(G)$. Therefore, by Theorem~\ref{thm:main}, $\alpha(G) \geq \frac12n-\sqrt{\frac35n}$, for every fullerene graph $G$. When $J^*$ is a minimum $T$-join of $G^*$, every face of $G^*$ is incident to at most one edge of $J^*$. This means that the set $J \subset E(G)$ corresponding to $J^*$ is a matching of $G$. Therefore, by Theorem~\ref{thm:main}, equality holds if and only if $n=60k^2$, for some $k \in \mathbb N$, and $\Aut(G) \cong I_h$.
\end{proof}

The \emph{diameter} of a graph $G$, denoted $\diam(G)$, is defined as the maximum distance over all pairs of vertices $u, v$ of $G$. The diameter of fullerene graphs satisfies the following upper bound.

\begin{theorem}[Andova et al.~\cite{ADKLS12+}]\label{thm:diameter}
If $G$ is a fullerene graph on $n$ vertices, then $\diam(G) \leq \frac15n+1$.
\end{theorem}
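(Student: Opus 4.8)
The plan is to work directly in the fullerene graph $G$ (not its dual) and extract a linear number of vertices from a diametral geodesic via a breadth-first layering. Fix a pair $u,v$ with $\dist_G(u,v)=\diam(G)=:d$, and set $L_i:=N^i(u)$ for $0\le i\le d$. Since $u$ is an endpoint of a diametral pair we have $\operatorname{ecc}(u)=d$, so the $L_i$ are exactly the nonempty distance layers and $\sum_{i=0}^{d}|L_i|=n$. The target inequality $d\le \tfrac15 n+1$ is equivalent to $n\ge 5(d-1)$, so it suffices to show that the layers are, on average and away from the two ends, of size at least $5$.

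The heart of the argument is a \emph{layer-width lemma}: for $i$ bounded away from $0$ and $d$, one has $|L_i|\ge 5$. Here is how I would establish it. Fullerene graphs are $3$-connected plane graphs, so each ball $N^i[u]$ is bounded by a (generically single) cycle, and the layer $L_i$ is essentially this boundary together with the edges crossing into $L_{i+1}$. The key structural input is combinatorial curvature: assigning to each cubic vertex $x$ the curvature $-\tfrac12+\sum_{f\ni x}\tfrac1{|f|}$, one sees that every vertex incident only to hexagons is \emph{flat}, and by discrete Gauss--Bonnet the total curvature is $2$, concentrated entirely at the twelve pentagons. Consequently, for all but boundedly many values of $i$ the annular region between $L_{i-1}$ and $L_{i+1}$ is flat, i.e.\ locally a piece of the hexagonal lattice, which forces the boundary cycles not to shrink. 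Their length is then bounded below by the length of a shortest noncontractible cycle of the surrounding ``tube,'' and since $G$ has girth $5$ (no triangles or $4$-cycles, as all faces have size $5$ or $6$), this length is at least $5$. This is the fullerene-side analogue of the isoperimetric inequalities proved for patches in Section~\ref{sec:patches} (Theorem~\ref{thm:justus} and Lemma~\ref{lem:perimeter}), transported from the triangulation to $G$ itself.

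Granting the layer-width lemma, the two ``caps'' are handled separately: near $u$ we have $|L_0|=1$ and $|L_1|=3$, and symmetrically the layers near $v$ are small, but these deficits are exactly the sort absorbed by the additive slack in $5(d-1)$ and the $+1$ in the statement. Summing $|L_i|\ge 5$ over the interior indices and adding the (small) cap contributions yields $n=\sum_{i=0}^d|L_i|\ge 5(d-1)$, and hence $\diam(G)=d\le\tfrac15 n+1$, as required.

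I expect the main obstacle to be a fully rigorous proof of the layer-width lemma, for two reasons. First, a layer $L_i$ need not induce a single cycle: when the ball $N^i[u]$ fails to be a topological disk (for instance once the layering has passed the ``equator'' and begins to close up around $v$), $L_i$ can split into several boundary components, and one must argue that the total size still does not drop below $5$ and that the curvature bookkeeping remains valid. Second, one must control the transition regions where the twelve pentagons sit, since it is precisely there that layers can pinch; bounding the number of ``defective'' indices by a constant (independent of $n$) so that they are swallowed by the slack is the delicate point. An alternative route that sidesteps the layering is to count vertices in a width-two strip $N^2[P]$ around a geodesic $P=v_0\cdots v_d$: the off-path neighbours $w_i$ are already forced to be distinct (a coincidence $w_i=w_j$ would create a cycle of length at most $4$, impossible since the girth is $5$), giving $n\ge 2d$, and pushing this to the factor $5$ by adding the second-neighbour vertices and the incident pentagonal/hexagonal faces. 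Either way, the combinatorial content lives entirely in the claim that a fullerene cannot be ``thinner than circumference $5$,'' and making that precise is where the real work lies.
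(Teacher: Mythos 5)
First, a point of reference: the paper does not prove this statement at all --- Theorem~\ref{thm:diameter} is imported from Andova et al.~\cite{ADKLS12+} and used as a black box, so there is no in-paper proof to compare against. Judged on its own terms, your write-up is a programme rather than a proof, and you say so yourself: everything hinges on the ``layer-width lemma'' that $|L_i|=|N^i(u)|\geq 5$ for all $i$ away from the two ends, and that lemma is nowhere established. The discrete Gauss--Bonnet observation only tells you that the annuli between consecutive layers are locally hexagonal away from the twelve pentagons; it does not by itself prevent a layer from being small, since a flat tube can have constant, small cross-section, and the whole difficulty is precisely to show the cross-section cannot drop below $5$ (including at the layers that meet pentagons, where you concede the argument ``pinches''). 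The girth-$5$ argument you invoke bounds the length of a \emph{cycle}, but $L_i$ is a vertex cut, not a cycle: $3$-connectivity gives only $|L_i|\geq 3$, and even the cyclic $5$-edge-connectivity of fullerene graphs gives $|\delta(N^{i-1}[u])|\geq 5$, hence only $|L_i|\geq\lceil 5/3\rceil=2$ after dividing by the maximum degree. Passing from ``every cycle has length at least $5$'' to ``every interior distance layer has at least $5$ vertices'' is the real content of the theorem, and it is missing.

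Second, even granting the lemma, your bookkeeping does not deliver the stated constant. With $|L_0|=1$, $|L_1|=3$ and, in the worst case, $|L_{d-1}|\geq 3$, $|L_d|\geq 1$, summing $|L_i|\geq 5$ over $2\leq i\leq d-2$ gives only $n\geq 5(d-3)+8=5d-7$, i.e.\ $\diam(G)\leq\frac15n+\frac75$, strictly weaker than $\frac15n+1$. The dodecahedron attains equality in the theorem ($d=5$, $n=20$, layers $1,3,6,6,3,1$), so the slack you appeal to does not exist: the caps must be analysed exactly, e.g.\ via the identity $|L_0|+|L_1|+|L_2|=10$ forced by girth $5$ together with a matching statement at the $v$-end. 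Your fallback route via a width-two strip around a geodesic fares no better: distinctness of the off-path neighbours gives only $n\geq 2d$, and the second neighbours of $w_i$ and $w_{i+2}$ can coincide on a common hexagonal face, so ``pushing this to the factor $5$'' is not a routine step. In short, the strategy is plausible and in the spirit of the isoperimetric results of Section~\ref{sec:patches}, but the two essential claims --- the layer-width lemma and the exact cap accounting --- remain unproved.
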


Corollary~\ref{cor:daugherty}, in conjunction with Theorems~\ref{thm:heckman-thomas} and~\ref{thm:diameter}, allows us to prove a conjecture of Graffiti~\cite[Conjecture 912]{FRFHC01}. Let us remark that the conjecture was proved for fullerene graphs on at least 617 502 vertices by Andova et al.~\cite{ADKLS12+}.

\begin{corollary}
If $G$ is a fullerene graph, then $\alpha(G) \geq 2(\diam(G)-1)$.
\end{corollary}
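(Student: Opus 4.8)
The plan is to play the linear lower bound on $\alpha(G)$ coming from Corollary~\ref{cor:daugherty} against the linear upper bound on $\diam(G)$ from Theorem~\ref{thm:diameter}, using Heckman--Thomas (Theorem~\ref{thm:heckman-thomas}) together with integrality to clean up the small graphs. Write $n=|V(G)|$. First I would apply Theorem~\ref{thm:diameter} to get $2(\diam(G)-1)\le\tfrac{2}{5}n$, so that it suffices to bound $\alpha(G)$ below by $\tfrac25 n$ whenever this is possible. I would also record at the outset the two integrality facts that drive the rest of the argument: $\alpha(G)$ is an integer, and $2(\diam(G)-1)$ is an \emph{even} integer.

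The main case is $n\ge 60$. Here Corollary~\ref{cor:daugherty} gives $\alpha(G)\ge\tfrac12 n-\sqrt{\tfrac35 n}$, and a one-line manipulation shows
\[
\tfrac12 n-\sqrt{\tfrac35 n}\ge\tfrac25 n \iff \tfrac1{10}n\ge\sqrt{\tfrac35 n} \iff n\ge 60.
\]
Thus for $n\ge 60$ we obtain $\alpha(G)\ge\tfrac25 n\ge 2(\diam(G)-1)$ with no appeal to integrality, and the corollary follows at once in this range.

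It then remains to treat the finitely many fullerene graphs with $n<60$, namely $n\in\{20,24,26,28,\dots,58\}$ (recall $n=22$ does not occur). For each such $n$ I would take the larger of the two integer lower bounds $\alpha(G)\ge\lceil\tfrac38 n\rceil$ (Theorem~\ref{thm:heckman-thomas}) and $\alpha(G)\ge\lceil\tfrac12 n-\sqrt{\tfrac35 n}\rceil$ (Corollary~\ref{cor:daugherty}), and compare it with the largest even integer not exceeding $\tfrac25 n$, which upper-bounds $2(\diam(G)-1)$. The rounded Daugherty bound already suffices for every $n<60$ except $n=20$ and $n=30$, where it falls exactly one short; at those two sizes the Heckman--Thomas bound rounds up to the required value and supplies the missing unit.

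The step I expect to be the main obstacle is precisely this finite verification for $20\le n\le 58$. In the band roughly $40\le n\le 58$ both continuous bounds dip strictly below $\tfrac25 n$, so the inequality $\alpha(G)\ge 2(\diam(G)-1)$ holds not because of any clean estimate but only after rounding $\alpha(G)$ up and using the evenness of $2(\diam(G)-1)$; the delicate point is that one must invoke Heckman--Thomas rather than Corollary~\ref{cor:daugherty} at the two exceptional sizes $n=20,30$, and check that the rounded Daugherty bound covers all remaining cases. Once this bookkeeping is done, the two regimes $n\ge 60$ and $n<60$ together establish the result for all fullerene graphs.
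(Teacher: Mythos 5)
Your proof is correct and follows essentially the same route as the paper: both combine the diameter bound (Theorem~\ref{thm:diameter}) with Daugherty's bound (Corollary~\ref{cor:daugherty}) for large $n$ and with Heckman--Thomas (Theorem~\ref{thm:heckman-thomas}) for the small exceptional sizes, finishing by integrality (the paper splits into $n<40$ and $n\ge 36$ and compares against $\lfloor \tfrac25 n\rfloor$, while you split at $n=60$ and do a finite check, invoking Heckman--Thomas only at $n=20,30$). One minor inaccuracy in your commentary: the evenness of $2(\diam(G)-1)$ is never actually needed, since for every $40\le n\le 58$ the rounded Daugherty bound already meets $\lfloor \tfrac25 n\rfloor$; this overstatement does not affect the validity of the argument.
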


\begin{proof}
Let $G$ be a fullerene graph on $n$ vertices. It is easy to check that $\left\lceil \frac38n \right\rceil \geq \left\lfloor \frac25n \right\rfloor$ if $n < 40$, and $\left\lceil \frac12n-\sqrt{\frac35n}\right\rceil \geq \left\lfloor \frac25n \right\rfloor$ if $n \geq 36$. In the former case, we apply Theorems~\ref{thm:heckman-thomas} and~\ref{thm:diameter}, and in the latter case, we apply Corollary~\ref{cor:daugherty} and Theorem~\ref{thm:diameter}, to show that $\alpha(G) \geq 2(\diam(G)-1)$.
\end{proof}

Motivated by H\"uckel theory from chemistry, Daugherty, Myrvold and Fowler~\cite{DaMyFo07} (see also~\cite{Dau09}) defined the \emph{closed-shell independence number} $\alpha^-(G)$ of a fullerene graph $G$ as the maximum size of an independent set $A$ of $G$ with the property that exactly half of the eigenvalues of $G-A$ are positive. Recall that an \emph{eigenvalue} of a graph $G$ is an eigenvalue of its \emph{adjacency matrix}, the square $n \times n$ matrix $(a_{uv})$ where $a_{uv}=1$ if $uv \in E(G)$, and $a_{uv}=0$ otherwise.

\begin{theorem}[Daugherty, Myrvold and Fowler~\cite{DaMyFo07}]\label{thm:closed-shell}
If $G$ is a fullerene graph, then $\alpha^-(G)\leq \frac38n+\frac32$.
\end{theorem}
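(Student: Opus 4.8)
The plan is to combine a spectral (interlacing) upper bound on the independence number of $G-A$, where $A$ is a closed-shell independent set, with a lower bound on that same quantity coming from the sparsity and planarity of $G-A$, and then to pinch the two bounds to control $|A|=\alpha^-(G)$.

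First I would fix an independent set $A$ realising $\alpha^-(G)$, write $a=|A|=\alpha^-(G)$, and set $H=G-A$. Since $H$ is an induced subgraph of a fullerene, it is planar, has maximum degree at most $3$, and has girth at least $5$ (in particular it is triangle-free). Because $A$ is independent and $G$ is cubic, every vertex of $A$ sends its three edges into $H$, so $H$ has $n'=n-a$ vertices and $m'=\tfrac32 n-3a$ edges; equivalently $\sum_{v\in V(H)}(3-d_H(v))=3a$. The closed-shell hypothesis says that exactly half of the eigenvalues of $H$ are positive, so, writing $n_+,n_0,n_-$ for the numbers of positive, zero, and negative eigenvalues of $H$, we have $n_+=n'/2$ and hence $n_0+n_-=n'/2$; in particular $n_-\le n_+$ and $n'$ is even.

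Next I would bound $\alpha(H)$ from above by the inertia bound. Applying Cauchy interlacing to the principal submatrix of the adjacency matrix of $H$ indexed by a maximum independent set (this submatrix is the zero matrix), one obtains $\alpha(H)\le n_0+\min(n_+,n_-)$. Using $n_-\le n_+$ and $n_0+n_-=n'/2$, this yields
\[
\alpha(H)\ \le\ n_0+n_-\ =\ \tfrac12 n'\ =\ \tfrac12(n-a).
\]
This is the crucial consequence of the closed-shell condition: the graph left after deleting $A$ must have independence number at most half its order, so $A$ cannot be enlarged indefinitely.

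Finally I would lower-bound $\alpha(H)$ using that $H$ is a sparse, triangle-free, planar graph, and pinch: a bound $\alpha(H)\ge L(n',m')$ whose ``slope'' in $a$ is smaller than that of $n'/2$ forces $a$ to be bounded above once combined with the displayed inequality, and substituting $n'=n-a$, $m'=\tfrac32 n-3a$ should give $a\le\tfrac38 n+\tfrac32$. The main obstacle is getting this lower bound with \emph{exactly} the right constant. The plain subcubic bound for triangle-free graphs, $\alpha(H)\ge\tfrac17(4n'-m')$, only yields $a\le\tfrac25 n$ after the pinch, so the improvement from $\tfrac25$ to $\tfrac38$ must come from planarity. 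I would therefore aim to prove a planar refinement of the Heckman--Thomas bound (Theorem~\ref{thm:heckman-thomas}) valid for subcubic triangle-free planar graphs, tracking the degree deficiency $3a$; one natural route is a discharging argument tailored to $H$, and an alternative is to restore $3$-regularity by attaching triangle-free planar gadgets at the degree-deficient vertices, apply Theorem~\ref{thm:heckman-thomas} to the resulting cubic planar triangle-free graph, and subtract the independent-set contribution of the gadgets. Both approaches require care to preserve planarity and girth at least $5$ and to handle small or degenerate components, and it is precisely this bookkeeping that should produce the additive term $\tfrac32$.
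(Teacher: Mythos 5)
First, a remark on scope: the paper does not prove Theorem~\ref{thm:closed-shell} at all --- it is quoted from Daugherty, Myrvold and Fowler~\cite{DaMyFo07} and used as a black box in Corollary~\ref{cor:shell} --- so there is no in-paper proof to compare yours against, and I can only assess your argument on its own terms. The first half of your argument is sound: for a closed-shell independent set $A$ with $|A|=a$ and $H=G-A$, the counts $n'=n-a$ and $m'=\tfrac32n-3a$ are correct, and the inertia (Cvetkovi\'c) bound together with $n_+=n'/2$ does give $\alpha(H)\le n_0+n_-=\tfrac12(n-a)$.

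The gap is in the second half, and it is not merely the deferred bookkeeping: the pinching strategy as you formulate it cannot produce the constant $\tfrac38$. Any lower bound of the form $\alpha(H)\ge L(n',m')$ that is valid for all triangle-free planar subcubic graphs must satisfy $L(n',m')\le n'/2$ whenever $n'/2\le m'\le n'-1$, because a disjoint union of paths on even numbers of vertices realises every such pair $(n',m')$ and has independence number exactly $n'/2$ (and is itself closed-shell: an even path has no zero eigenvalues and equally many positive and negative ones). Along your substitution $n'=n-a$, $m'=\tfrac32n-3a$, the condition $m'\ge n'/2$ is exactly $a\le\tfrac25n$, so the inequality $L(n',m')\le\tfrac12(n-a)$ is never violated for any $a\le\tfrac25n$; the pinch therefore yields $\alpha^-(G)\le\tfrac25n$ and provably nothing stronger, no matter how sharp a planar refinement of Heckman--Thomas you establish. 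Reverse-engineering the constants confirms this: to reach $\tfrac38n+\tfrac32$ you would need something like $\alpha(H)\ge\tfrac1{36}\left(21n'-5m'-18\right)$, which at $m'=n'/2$ exceeds $n'/2$ as soon as $n'>36$ and is therefore false for unions of even paths. To obtain $\tfrac38n+\tfrac32$ (which equals $\tfrac34$ of the number of faces of $G$, suggesting a face-based argument) you must exploit more than the vertex and edge counts of $G-A$ --- for instance its number of components, its face structure, or the way it embeds in the fullerene --- and that extra ingredient is exactly what is missing from your plan.
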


Daugherty, Myrvold and Fowler~\cite{DaMyFo07} (see also~\cite[Conjecture~7.7.1]{Dau09}) conjectured that the equality $\alpha^-(G)=\alpha(G)$ holds only when $G$ is isomorphic to one of the three fullerene graphs in Figure~\ref{fig:shell}, and verified the conjecture for all fullerene graphs on $n \leq 100$ vertices. Corollary~\ref{cor:daugherty} and Theorem~\ref{thm:closed-shell} imply the conjecture for all fullerene graphs on $n>60$ vertices, so the conjecture is now proved completely.

\begin{corollary}
\label{cor:shell}
A fullerene graph $G$ satisfies $\alpha^-(G)=\alpha(G)$ if and only if $G$ is one of the graphs in Figure~\ref{fig:shell}.
\end{corollary}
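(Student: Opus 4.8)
The plan is to sandwich the common value $\alpha^-(G)=\alpha(G)$ between the lower bound on $\alpha$ furnished by Corollary~\ref{cor:daugherty} and the upper bound on $\alpha^-$ furnished by Theorem~\ref{thm:closed-shell}, thereby forcing $n$ to be small, and then to invoke the exhaustive verification of Daugherty, Myrvold and Fowler for the remaining small cases. Since the genuine work has already been done in Corollary~\ref{cor:daugherty} (equivalently Theorem~\ref{thm:main}), what remains is an elementary numerical deduction.

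First I would dispose of the easy direction. As $\alpha^-(G)$ is the maximum of $|A|$ taken over a subfamily of the independent sets of $G$, we always have $\alpha^-(G)\leq\alpha(G)$; hence proving that each of the three graphs in Figure~\ref{fig:shell} satisfies $\alpha^-(G)=\alpha(G)$ is part of the computation already carried out in~\cite{DaMyFo07} for all fullerenes on at most $100$ vertices. This settles the implication that these three graphs do achieve equality.

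For the converse, suppose $\alpha^-(G)=\alpha(G)$, where $G$ is a fullerene on $n$ vertices. Chaining the two cited bounds gives
\[
\tfrac12 n-\sqrt{\tfrac35 n}\;\leq\;\alpha(G)=\alpha^-(G)\;\leq\;\tfrac38 n+\tfrac32,
\]
so that $\tfrac18 n-\tfrac32\leq\sqrt{\tfrac35 n}$. Every fullerene graph has $n\geq 20$, so the left-hand side is positive and I may square it; after rearrangement this yields the quadratic inequality $n^2-\tfrac{312}{5}n+144\leq 0$, whose roots are $\tfrac{12}{5}$ and $60$. Consequently the equality $\alpha^-(G)=\alpha(G)$ forces $n\leq 60$.

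Finally, since $60\leq 100$, every fullerene with $\alpha^-(G)=\alpha(G)$ lies in the range covered by the exhaustive check in~\cite{DaMyFo07}, which identifies the only such graphs as the three in Figure~\ref{fig:shell}. The argument is entirely elementary; the one point that must be checked is that the crossover of the two bounds occurs at $n=60$, comfortably inside the verified range $n\leq 100$, so that no fullerene slips through the gap between the asymptotic argument and the finite computation. I therefore expect no substantive obstacle beyond confirming this numerical threshold.
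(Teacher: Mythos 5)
Your proof is correct and follows essentially the same route as the paper: both arguments pin $\alpha^-(G)=\alpha(G)$ between the lower bound of Corollary~\ref{cor:daugherty} and the upper bound of Theorem~\ref{thm:closed-shell} to force $n\leq 60$ (the paper states the equivalent fact that $\frac38n+\frac32<\frac12n-\sqrt{\frac35n}$ for $n>60$), and then both invoke the exhaustive verification for $n\leq 100$ from~\cite{DaMyFo07}. Your explicit solution of the quadratic, locating the crossover exactly at $n=60$, is a valid and slightly more detailed justification of the numerical threshold the paper merely asserts.
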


\begin{proof}
Let $G$ be a fullerene graph on $n$ vertices. The conjecture was verified for $n \leq 100$ in~\cite{Dau09}, so it suffices to consider the case $n > 100$. Since $\left\lfloor \frac38n+\frac32 \right\rfloor < \left\lceil 12n-\sqrt{\frac35n} \right\rceil$ for $n>60$, it follows by Corollary~\ref{cor:daugherty} and Theorem~\ref{thm:closed-shell} that $\alpha^-(G)<\alpha(G)$ for $n>60$. 
\end{proof}

\begin{figure}
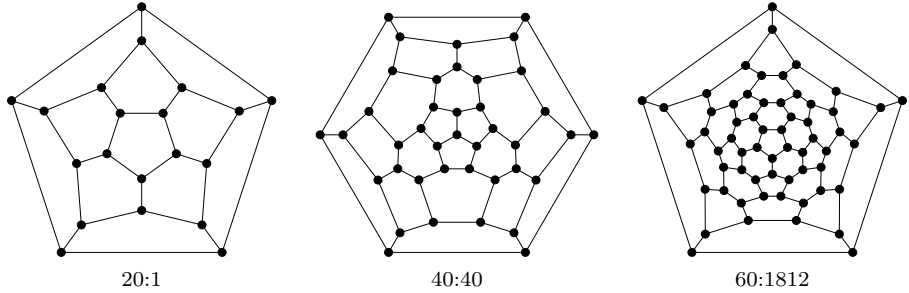

\centering
\null\hfill
\subfloat[$20$:$1$]{\label{fig:C20}
\begin{tikzgraph}[scale=0.6,ultra thin]
\path (54:0.8) coordinate (a1);
\path (126:0.8) coordinate (a2);
\path (198:0.8) coordinate (a3);
\path (270:0.8) coordinate (a4);
\path (342:0.8) coordinate (a5);

\path (90:2.25) coordinate (b1);
\path (126:1.5) coordinate (b2);
\path (162:2.25) coordinate (b3);
\path (198:1.5) coordinate (b4);
\path (234:2.25) coordinate (b5);
\path (270:1.5) coordinate (b6);
\path (306:2.25) coordinate (b7);
\path (342:1.5) coordinate (b8);
\path (18:2.25) coordinate (b9);
\path (54:1.5) coordinate (b10);

\path (90:3) coordinate (c1);
\path (162:3) coordinate (c2);
\path (234:3) coordinate (c3);
\path (306:3) coordinate (c4);
\path (18:3) coordinate (c5);

\draw (a1)--(a2)--(a3)--(a4)--(a5)--cycle
      (b1)--(b2)--(b3)--(b4)--(b5)--(b6)--(b7)--(b8)--(b9)--(b10)--cycle
      (c1)--(c2)--(c3)--(c4)--(c5)--cycle
      (a1)--(b10) (a2)--(b2) (a3)--(b4) (a4)--(b6) (a5)--(b8)
      (b1)--(c1) (b3)--(c2) (b5)--(c3) (b7)--(c4) (b9)--(c5);

\foreach\i in {1,...,5}
{
  \draw (a\i) node[vertex] {};
}

\foreach\i in {1,...,10}
{
  \draw (b\i) node[vertex] {};
}

\foreach\i in {1,...,5}
{
  \draw (c\i) node[vertex] {};
}
\end{tikzgraph}
}
\hfill
\subfloat[$40$:$40$]{\label{fig:C40}
\begin{tikzgraph}[scale=0.6,ultra thin]
\path (90:0.8-0.3) coordinate (a1);
\path (130:0.8) coordinate (a2);
\path (170:0.8) coordinate (a3);
\path (210:0.8-0.3) coordinate (a4);
\path (250:0.8) coordinate (a5);
\path (290:0.8) coordinate (a6);
\path (330:0.8-0.3) coordinate (a7);
\path (10:0.8) coordinate (a8);
\path (50:0.8) coordinate (a9);

\path (90:2-0.5) coordinate (b1);
\path (110:2-0.7) coordinate (b2);
\path (190:2-0.7) coordinate (b3);
\path (210:2-0.5) coordinate (b4);
\path (230:2-0.7) coordinate (b5);
\path (310:2-0.7) coordinate (b6);
\path (330:2-0.5) coordinate (b7);
\path (350:2-0.7) coordinate (b8);
\path (70:2-0.7) coordinate (b9);

\path (90:2) coordinate (c1);
\path (110+25:2) coordinate (c2);
\path (190-25:2) coordinate (c3);
\path (210:2) coordinate (c4);
\path (230+25:2) coordinate (c5);
\path (310-25:2) coordinate (c6);
\path (330:2) coordinate (c7);
\path (350+25:2) coordinate (c8);
\path (70-25:2) coordinate (c9);

\path (100+20:2.5) coordinate (d1);
\path (200-20:2.5) coordinate (d2);
\path (220+20:2.5) coordinate (d3);
\path (320-20:2.5) coordinate (d4);
\path (340+20:2.5) coordinate (d5);
\path (80-20:2.5) coordinate (d6);

\path (120:3) coordinate (e1);
\path (180:3) coordinate (e2);
\path (240:3) coordinate (e3);
\path (300:3) coordinate (e4);
\path (0:3) coordinate (e5);
\path (60:3) coordinate (e6);

\draw (0,0)--(a1) (0,0)--(a4) (0,0)--(a7)
      (a1)--(a2)--(a3)--(a4)--(a5)--(a6)--(a7)--(a8)--(a9)--cycle
      (a9)--(b9)--(b1)--(b2)--(a2)
      (a3)--(b3)--(b4)--(b5)--(a5)
      (a6)--(b6)--(b7)--(b8)--(a8)
      (b1)--(c1) (b2)--(c2) (b3)--(c3) (b4)--(c4) (b5)--(c5) (b6)--(c6) (b7)--(c7) (b8)--(c8) (b9)--(c9)
      (c1)--(d1)--(c2)--(c3)--(d2)--(c4)--(d3)--(c5)--(c6)--(d4)--(c7)--(d5)--(c8)--(c9)--(d6)--cycle
      (d1)--(e1) (d2)--(e2) (d3)--(e3) (d4)--(e4) (d5)--(e5) (d6)--(e6)
      (e1)--(e2)--(e3)--(e4)--(e5)--(e6)--cycle;

\draw (0,0) node[vertex] {};

\foreach\i in {1,...,9}
{
  \draw (a\i) node[vertex] {};
}

\foreach\i in {1,...,9}
{
  \draw (b\i) node[vertex] {};
}

\foreach\i in {1,...,9}
{
  \draw (c\i) node[vertex] {};
}

\foreach\i in {1,...,6}
{
  \draw (d\i) node[vertex] {};
}

\foreach\i in {1,...,6}
{
  \draw (e\i) node[vertex] {};
}

\end{tikzgraph}
}
\hfill
\subfloat[$60$:$1812$]{\label{fig:C60}

\begin{tikzgraph}[scale=0.6,ultra thin]

\path (126:0.35) coordinate (a1);
\path (198:0.35) coordinate (a2);
\path (270:0.35) coordinate (a3);
\path (342:0.35) coordinate (a4);
\path (54:0.35) coordinate (a5);

\path (102:1-0.1) coordinate (b1);
\path (126:1-0.3) coordinate (b2);
\path (150:1-0.1) coordinate (b3);
\path (174:1-0.1) coordinate (b4);
\path (198:1-0.3) coordinate (b5);
\path (222:1-0.1) coordinate (b6);
\path (246:1-0.1) coordinate (b7);
\path (270:1-0.3) coordinate (b8);
\path (294:1-0.1) coordinate (b9);
\path (318:1-0.1) coordinate (b10);
\path (342:1-0.3) coordinate (b11);
\path (6:1-0.1) coordinate (b12);
\path (30:1-0.1) coordinate (b13);
\path (54:1-0.3) coordinate (b14);
\path (78:1-0.1) coordinate (b15);

\path (99:1.5) coordinate (c1);
\path (117:1.5-0.3) coordinate (c2);
\path (135:1.5-0.3) coordinate (c3);
\path (153:1.5) coordinate (c4);
\path (171:1.5) coordinate (c5);
\path (189:1.5-0.3) coordinate (c6);
\path (207:1.5-0.3) coordinate (c7);
\path (225:1.5) coordinate (c8);
\path (243:1.5) coordinate (c9);
\path (261:1.5-0.3) coordinate (c10);
\path (279:1.5-0.3) coordinate (c11);
\path (297:1.5) coordinate (c12);
\path (315:1.5) coordinate (c13);
\path (333:1.5-0.3) coordinate (c14);
\path (351:1.5-0.3) coordinate (c15);
\path (9:1.5) coordinate (c16);
\path (27:1.5) coordinate (c17);
\path (45:1.5-0.3) coordinate (c18);
\path (63:1.5-0.3) coordinate (c19);
\path (81:1.5) coordinate (c20);

\path (90:2.5) coordinate (d1);
\path (114-5:2-0.2) coordinate (d2);
\path (138+5:2-0.2) coordinate (d3);
\path (162:2.5) coordinate (d4);
\path (186-5:2-0.2) coordinate (d5);
\path (210+5:2-0.2) coordinate (d6);
\path (234:2.5) coordinate (d7);
\path (258-5:2-0.2) coordinate (d8);
\path (282+5:2-0.2) coordinate (d9);
\path (306:2.5) coordinate (d10);
\path (330-5:2-0.2) coordinate (d11);
\path (354+5:2-0.2) coordinate (d12);
\path (18:2.5) coordinate (d13);
\path (44-5:2-0.2) coordinate (d14);
\path (68+5:2-0.2) coordinate (d15);

\path (90:3) coordinate (e1);
\path (162:3) coordinate (e2);
\path (234:3) coordinate (e3);
\path (306:3) coordinate (e4);
\path (18:3) coordinate (e5);

\draw (a1)--(a2)--(a3)--(a4)--(a5)--cycle
      (b1)--(b2)--(b3)--(b4)--(b5)--(b6)--(b7)--(b8)--(b9)--(b10)--(b11)--(b12)--(b13)--(b14)--(b15)--cycle
      (a1)--(b2) (a2)--(b5) (a3)--(b8) (a4)--(b11) (a5)--(b14)
      (c1)--(c2)--(c3)--(c4)--(c5)--(c6)--(c7)--(c8)--(c9)--(c10)--(c11)--(c12)--(c13)--(c14)--(c15)--(c16)--(c17)--(c18)--(c19)--(c20)--cycle
      (b1)--(c2) (b3)--(c3) (b4)--(c6) (b6)--(c7) (b7)--(c10) (b9)--(c11) (b10)--(c14) (b12)--(c15) (b13)--(c18) (b15)--(c19)
      (d1)--(d2)--(d3)--(d4)--(d5)--(d6)--(d7)--(d8)--(d9)--(d10)--(d11)--(d12)--(d13)--(d14)--(d15)--cycle
      (c1)--(d2) (c4)--(d3) (c5)--(d5) (c8)--(d6) (c9)--(d8) (c12)--(d9) (c13)--(d11) (c16)--(d12) (c17)--(d14) (c20)--(d15)
      (e1)--(e2)--(e3)--(e4)--(e5)--cycle
      (d1)--(e1) (d4)--(e2) (d7)--(e3) (d10)--(e4) (d13)--(e5)
;

\foreach\i in {1,...,5}
{
  \draw (a\i) node[vertex] {};
}

\foreach\i in {1,...,15}
{
  \draw (b\i) node[vertex] {};
}

\foreach\i in {1,...,20}
{
  \draw (c\i) node[vertex] {};
}

\foreach\i in {1,...,15}
{
  \draw (d\i) node[vertex] {};
}

\foreach\i in {1,...,5}
{
  \draw (e\i) node[vertex] {};
}

\end{tikzgraph}
}
\hfill\null
\caption{The three graphs in Corollary~\ref{cor:shell}, with the nomenclature of~\cite{FoMa95}. The graph $20$:$1$ is the dodecahedral graph, $40$:$40$ is the unique fullerene graph on $40$ vertices with the tetrahedral automorphism group $T_d$, and $60$:$1812$ is the buckminsterfullerene graph.}
\label{fig:shell}
\end{figure}

\section{Smallest eigenvalues of fullerene graphs}\label{sec:eigenvalues}

As the final application of Theorem~\ref{thm:main}, we compute an upper bound on the smallest eigenvalue of a fullerene graph $G$. Recall that the \emph{Laplacian} of a graph with adjacency matrix $(a_{uv})$ is the $n \times n$ matrix $(c_{uv})$, where $c_{uv}=d(u)$ if $u=v$, and $c_{uv}=-a_{uv}$ if $u \neq v$. A \emph{Laplacian eigenvalue} of a graph is an eigenvalue of its Laplacian. The smallest eigenvalue and the largest Laplacian eigenvalue of $G$ are denoted by $\lambda_n(G)$ and $\mu_n(G)$, respectively.

The maximum size of a cut in a graph can be bounded in terms of its largest Laplacian eigenvalue. The following is a corollary of a more general theorem of Mohar and Poljak~\cite{MohPol90}.

\begin{theorem}[Mohar and Poljak~\cite{MohPol90}]\label{thm:mohar-poljak}
If $G$ is a graph on $n$ vertices, then $|\delta(X)| \leq \frac 14n\mu_n(G)$, for every $X \subseteq V(G)$.
\end{theorem}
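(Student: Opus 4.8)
The plan is to realise the cut $\delta(X)$ as a value of the Laplacian quadratic form at a suitable vector, and then to bound that value by the largest Laplacian eigenvalue using the Rayleigh--Ritz variational principle. First I would introduce the signed indicator vector $x \in \mathbb R^{V(G)}$ defined by $x_v = 1$ if $v \in X$ and $x_v = -1$ otherwise. Writing $L=(c_{uv})$ for the Laplacian of $G$, the defining relations $c_{uu}=d(u)$ and $c_{uv}=-a_{uv}$ (for $u \neq v$) give the standard edge-factorisation
\[
x^{\top} L x = \sum_{u} d(u)\,x_u^2 - 2\!\!\sum_{uv \in E(G)}\!\! x_u x_v = \sum_{uv \in E(G)} (x_u - x_v)^2,
\]
where the last step uses $\sum_u d(u) x_u^2 = \sum_{uv \in E(G)}(x_u^2+x_v^2)$.

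Next I would evaluate both sides concretely. Each summand $(x_u-x_v)^2$ vanishes when $u,v$ lie on the same side of the partition, and equals $(\pm 2)^2 = 4$ precisely when $uv \in \delta(X)$; hence $x^{\top} L x = 4\,|\delta(X)|$. At the same time, since every coordinate of $x$ is $\pm 1$, the normalisation is uniform, namely $x^{\top} x = n$ independently of the choice of $X$. Finally, because $L$ is real symmetric, Rayleigh--Ritz yields $x^{\top} L x \leq \mu_n(G)\, x^{\top} x$ for every vector $x$, with $\mu_n(G)$ the largest eigenvalue of $L$. Combining the three relations gives $4\,|\delta(X)| \leq \mu_n(G)\, n$, which is the asserted inequality.

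There is essentially no obstacle here, as this is the specialisation of the Mohar--Poljak spectral bound to the unweighted cut function. The only two points that deserve a moment's care are the edge-factorisation of the quadratic form, and the observation that the $\pm 1$ normalisation is what makes $x^{\top}x=n$ rather than some $X$-dependent quantity, so that the resulting bound holds uniformly over all $X \subseteq V(G)$.
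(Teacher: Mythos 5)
Your argument is correct: the identity $x^{\top}Lx=\sum_{uv\in E(G)}(x_u-x_v)^2=4|\delta(X)|$ for the $\pm1$ indicator vector, together with $x^{\top}x=n$ and the Rayleigh--Ritz bound for the symmetric matrix $L$, gives exactly $|\delta(X)|\leq\frac14 n\mu_n(G)$. The paper does not prove this statement but only cites it as a corollary of a more general (weighted) result of Mohar and Poljak, and your derivation is the standard specialisation of that argument, so there is nothing to correct or compare further.
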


Andova et al.~\cite{ADKLS12+} have recently used Theorem~\ref{thm:mohar-poljak} to show that $\lambda_n(G) \leq -3+\frac{157.16}{\sqrt{n}}$ for every fullerene graph $G$. Their bound can be improved by applying Corollary~\ref{cor:daugherty}.

\begin{corollary}\label{cor:eigenvalue}
If $G$ is a fullerene graph on $n$ vertices, then $\lambda_n(G) \leq -3+8\sqrt{\frac3{5n}}$.
\end{corollary}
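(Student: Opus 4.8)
The plan is to translate the bound on $\tau_{\odd}(G)$ into a lower bound on the largest Laplacian eigenvalue $\mu_n(G)$, and then to convert this into the desired upper bound on $\lambda_n(G)$ using the regularity of $G$. Since $G$ is cubic, its Laplacian is $3I-A$, where $A$ is the adjacency matrix; hence the Laplacian eigenvalues are precisely the numbers $3-\lambda$ as $\lambda$ ranges over the adjacency eigenvalues, and in particular $\mu_n(G)=3-\lambda_n(G)$. Thus it suffices to prove the lower bound $\mu_n(G)\geq 6-8\sqrt{\tfrac{3}{5n}}$.

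To obtain a lower bound on $\mu_n(G)$, I would exhibit a large cut and apply Theorem~\ref{thm:mohar-poljak}, which gives $\mu_n(G)\geq \tfrac{4}{n}\,|\delta(X)|$ for every $X\subseteq V(G)$. As observed in the introduction, the max-cut problem is dual to finding a minimum odd cycle transversal: for any $X$, the edges not in $\delta(X)$ form an odd cycle transversal, and conversely any minimum transversal $J$ induces a bipartition whose cut contains $E(G)-J$. Consequently the maximum cut of $G$ has size exactly $|E(G)|-\tau_{\odd}(G)$. Since $G$ is cubic, $|E(G)|=\tfrac32n$, so by Theorem~\ref{thm:main} there is a set $X\subseteq V(G)$ with
\[
|\delta(X)| = |E(G)|-\tau_{\odd}(G) \geq \tfrac32n-\sqrt{\tfrac{12}{5}n}.
\]

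Combining the two ingredients, for this $X$ one gets
\[
\mu_n(G) \geq \frac{4}{n}\left(\tfrac32n-\sqrt{\tfrac{12}{5}n}\right) = 6 - \frac{4}{n}\sqrt{\tfrac{12}{5}n} = 6 - 8\sqrt{\tfrac{3}{5n}},
\]
where the last equality is the elementary simplification $\tfrac{4}{n}\sqrt{\tfrac{12}{5}n}=4\sqrt{\tfrac{12}{5n}}=8\sqrt{\tfrac{3}{5n}}$. Substituting into $\lambda_n(G)=3-\mu_n(G)$ then yields $\lambda_n(G)\leq -3+8\sqrt{\tfrac{3}{5n}}$, as required. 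The argument is essentially a short chain of reductions, so I do not expect a genuine obstacle; the only points requiring care are the identity $\mu_n(G)=3-\lambda_n(G)$, which relies on $G$ being regular, and the duality between the maximum cut and $|E(G)|-\tau_{\odd}(G)$, both of which are standard.
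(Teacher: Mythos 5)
Your proposal is correct and follows essentially the same route as the paper: both use the identity $\lambda_n(G)=3-\mu_n(G)$ for the cubic graph $G$, exhibit a cut of size at least $\tfrac32 n-\tau_{\odd}(G)$ coming from a minimum odd cycle transversal, and apply Theorem~\ref{thm:mohar-poljak} together with Theorem~\ref{thm:main}. The only difference is cosmetic: you spell out the max-cut duality and the arithmetic simplification in more detail than the paper does.
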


\begin{proof}
Since $G$ is $3$-regular, the smallest eigenvalue of $G$ is $\lambda_n(G)=3-\mu_n(G)$, and there exists a cut $\delta(X)$ such that $|\delta(X)| \geq \frac32n-\tau_{\odd}(G)$. Therefore, by Theorem~\ref{thm:mohar-poljak}, $\lambda_n(G) \leq -3+\frac 4n\tau_{\odd}(G)$, so by Theorem~\ref{thm:main}, $\lambda_n(G) \leq -3+8\sqrt{\frac3{5n}}$.
\end{proof}

Fowler, Hansen and Stevanovi\'c~\cite{FoHaSt03} showed that the smallest eigenvalue of the truncated icosahedron (see Figure~\ref{fig:C60}) is equal to $-\phi^2$, where $\phi$ is the golden ratio $\frac{1+\sqrt 5}2$, and conjectured that, among all fullerene graphs on at least $60$ vertices, the truncated icosahedron has the maximum smallest eigenvalue. By Corollary~\ref{cor:eigenvalue}, any fullerene graph on at least $264$ vertices satisfies the conjecture.

\section*{Acknowledgements}
The authors would like to thank Andr\'as Seb\H o for teaching them about $T$-joins and $T$-cuts, to Louis Esperet for reading an earlier draft of this paper, and to Dragan Stevanovi\'c for pointing out a gap in the proof of Theorem~\ref{thm:T-join}.

\bibliographystyle{plain}
\bibliography{FaKlSt12}

\def\soft#1{\leavevmode\setbox0=\hbox{h}\dimen7=\ht0\advance \dimen7
  by-1ex\relax\if t#1\relax\rlap{\raise.6\dimen7
  \hbox{\kern.3ex\char'47}}#1\relax\else\if T#1\relax
  \rlap{\raise.5\dimen7\hbox{\kern1.3ex\char'47}}#1\relax \else\if
  d#1\relax\rlap{\raise.5\dimen7\hbox{\kern.9ex \char'47}}#1\relax\else\if
  D#1\relax\rlap{\raise.5\dimen7 \hbox{\kern1.4ex\char'47}}#1\relax\else\if
  l#1\relax \rlap{\raise.5\dimen7\hbox{\kern.4ex\char'47}}#1\relax \else\if
  L#1\relax\rlap{\raise.5\dimen7\hbox{\kern.7ex
  \char'47}}#1\relax\else\message{accent \string\soft \space #1 not
  defined!}#1\relax\fi\fi\fi\fi\fi\fi}
\begin{thebibliography}{10}

\bibitem{ADKLS12+}
V.~Andova, T.~Do\v{s}li\'{c}, M.~Krnc, B.~Lu\v{z}ar, and R.~\v{S}krekovski.
\newblock On the diameter and some related invariants of fullerene graphs.
\newblock {\em MATCH Commun. Math. Comput. Chem.}, 68(1):109--130, 2012.

\bibitem{CCPS98}
W.~J. Cook, W.~H. Cunningham, W.~R. Pulleyblank, and A.~Schrijver.
\newblock {\em Combinatorial Optimization}.
\newblock Wiley Interscience Series in Discrete Mathematics and Optimization.
  Wiley, New York, 1998.

\bibitem{CuWa09}
Q.~Cui and J.~Wang.
\newblock Maximum bipartite subgraphs of cubic triangle-free planar graphs.
\newblock {\em Discrete Math.}, 309(5):1091--1111, 2009.

\bibitem{Dau09}
S.~Daugherty.
\newblock {\em Independent Sets and Closed-Shell Independent Sets of
  Ful\-ler\-enes}.
\newblock Ph.{D}. {T}hesis, University of Victoria, 2009.

\bibitem{DaMyFo07}
S.~Daugherty, W.~Myrvold, and P.~W. Fowler.
\newblock Backtracking to compute the closed-shell independence number of a
  fullerene.
\newblock {\em MATCH Commun. Math. Comput. Chem.}, 58(2):385--401, 2007.

\bibitem{DoVu07}
T.~Do\v{s}li\'{c} and D.~Vuki\v{c}evi{\'c}.
\newblock Computing the bipartite edge frustration of fullerene graphs.
\newblock {\em Discrete Appl. Math.}, 155(10):1294--1301, 2007.

\bibitem{DvLiSk12}
Z.~Dvo\v{r}\'ak, B.~Lidick\'y, and R.~\v{S}krekovski.
\newblock Bipartizing fullerenes.
\newblock {\em European J. Combin.}, 33(6):1286--1293, 2012.

\bibitem{Erd65}
P.~Erd\H{o}s.
\newblock On some extremal problems in graph theory.
\newblock {\em Israel J. Math.}, 3(2):113--116, 1965.

\bibitem{FHRV07}
S.~Fiorini, N.~Hardy, B.~Reed, and A.~Vetta.
\newblock Approximate min-max relations for odd cycles in planar graphs.
\newblock {\em Math Program. Ser. B}, 110(1):71--91, 2007.

\bibitem{FoHaSt03}
P.~W. Fowler, P.~Hansen, and D.~Stevanovi\'{c}.
\newblock A note on the smallest eigenvalue of fullerenes.
\newblock {\em MATCH Commun. Math. Comput. Chem.}, 48:37--48, 2003.

\bibitem{FoMa95}
P.~W. Fowler and D.~E. Manolopoulos.
\newblock {\em An Atlas of Fullerenes}.
\newblock Oxford University Press, Oxford, 1995.

\bibitem{FRFHC01}
P.~W. Fowler, K.~M. Rogers, S.~Fajtlowicz, P.~Hansen, and G.~Caporossi.
\newblock Facts and conjectures about fullerene graphs: leapfrog, cylinder and
  {R}amanujan fullerenes.
\newblock In A.~Betten, A.~Kohnert, R.~Laue, and A.~Wassermann, editors, {\em
  Algebraic Combinatorics and Applications (G\"{o}{\ss}weinstein, 1999)}, pages
  134--146, Berlin, 2001. Springer-Verlag.

\bibitem{HeTh06}
C.~C. Heckman and R.~Thomas.
\newblock Independent sets in triangle-free cubic planar graphs.
\newblock {\em J. Combin. Theory Ser. B}, 96(2):253--275, 2006.

\bibitem{HopSta82}
G.~Hopkins and W.~Staton.
\newblock Extremal bipartite subgraphs of cubic triangle-free graphs.
\newblock {\em J. Graph Theory}, 6(2):115--121, 1982.

\bibitem{Jus07}
C.~Justus.
\newblock {\em Boundaries of Triangle-Patches and the Expander Constant of
  Fullerenes}.
\newblock Ph.{D}. {T}hesis, Universit\"{a}t Bielefeld, 2007.

\bibitem{KrSeSt11+}
D.~Kr\'al', J.-S. Sereni, and L.~Stacho.
\newblock Min-max relations for odd cycles in planar graphs.
\newblock arXiv:1108.4281v1.

\bibitem{KrVo04}
D.~Kr\'al' and H.-J. Voss.
\newblock Edge-disjoint odd cycles in planar graphs.
\newblock {\em J. Combin. Theory Ser. B}, 90(1):107--120, 2004.

\bibitem{LoPl86}
L.~Lov{\'a}sz and M.~D. Plummer.
\newblock {\em Matching Theory}, volume 121 of {\em North-Holland Mathematics
  Studies}.
\newblock North-Holland Publishing Co., Amsterdam, 1986.
\newblock Annals of Discrete Mathematics, 29.

\bibitem{MohPol90}
B.~Mohar and S.~Poljak.
\newblock Eigenvalues and the max-cut problem.
\newblock {\em Czech. Math. J.}, 40(2):343--352, 1990.

\bibitem{Sch03}
A.~Schrijver.
\newblock {\em Combinatorial optimization: {P}olyhedra and efficiency},
  volume~24 of {\em Algorithms and Combinatorics}.
\newblock Springer-Verlag, Berlin, 2003.

\bibitem{Sey81}
P.~Seymour.
\newblock On odd cuts and plane multicommodity flows.
\newblock {\em Proc. London Math. Soc.}, 42(1):178--192, 1981.

\bibitem{Tho06}
C.~Thomassen.
\newblock On the max-cut problem for a planar, cubic, triangle-free graph, and
  the {C}hinese postman problem for a planar triangulation.
\newblock {\em J. Graph Theory}, 53(4):261--269, 2006.

\end{thebibliography}
\enlargethispage{2\baselineskip}

\end{document}